    \definecolor{darkgreen}{cmyk}{1,0,1,.2}
    \definecolor{m}{rgb}{1,0.1,1}
\theoremstyle{definition}
\theoremstyle{plain}
  \newtheorem{thm}{Theorem}[section]
  \newtheorem{lem}[thm]{Lemma}
  \newtheorem{cor}[thm]{Corollary} 
\theoremstyle{definition}
  \newtheorem{defn}[thm]{Definition}
  \newtheorem{rmk}[thm]{Remark}
  \newtheorem{ex}[thm]{Example}
\theoremstyle{plain}
\DeclareMathOperator{\im}{im}
\DeclareMathOperator{\coker}{coker}
\DeclareMathOperator{\rk}{rank}
\DeclareMathOperator{\tC}{Cone}
\DeclareMathOperator{\ttC}{\widetilde{Cone}}
\DeclareMathOperator{\wtC}{\widehat{Cone}}
\def\w{\wedge}
\def\om{\omega}
\def\Om{\Omega}
\def\mP{{\cal{P}}}
\def\com{c(\psi)}
\def\cpsi{c(\psi)}
\def\del{\partial}
\def\CM{\mathcal{M}}
\def\txw{r}
\def\da{d_A}
\def\db{d_B}
\def\dc{d_C}
\def\vp{\varphi}
\def\dap{d^\pi_A}
\def\itwo{\iota_2}
\def\vpt{\vp_2}
\def\dcc{d_{\widetilde{C}}}
\def\dcw{d_{\widehat{C}}}
\def\pio{\pi_1}
\def\tila{\tilde{a}}
\numberwithin{equation}{section}
\begin{document}

\title{
\bf\
{Mapping Cone and Morse Theory}
}

\author{David Clausen, Xiang Tang and Li-Sheng Tseng\\
\\
}

\date{}

\maketitle

\begin{abstract}

On a smooth manifold, we associate to any closed differential form a mapping cone complex.  The cohomology of this mapping cone complex can vary with the de Rham  cohomology class of the closed form.  We present a novel Morse theoretical description for the mapping cone cohomology.  Specifically, we introduce a Morse complex for the mapping cone complex which is generated by pairs of critical points with the differential defined by gradient flows and an integration of the closed form over spaces of gradient flow lines.  We prove that the cohomology of our cone Morse complex is isomorphic to the mapping cone cohomology and hence independent of both the Riemannian metric and the Morse function used to define the complex.  We also obtain sharp inequalities that bound the dimension of the mapping cone cohomology in terms of the number of Morse critical points and the properties of the specified closed form. Our results are widely applicable, especially for any manifold equipped with a geometric structure described by a closed differential form.  We also obtain a bound on the difference between the number of Morse critical points and the Betti numbers.

\end{abstract}

\tableofcontents


\section{Introduction}

Manifolds equipped with a geometric structure described by a closed differential form are widely studied. For example, a large class are the symplectic manifolds which by definition contain a non-degenerate, closed two-form.  Another large class are special holonomy manifolds.  These include Calabi-Yau threefolds and $G_2$ manifolds which carry a closed invariant three-form.  And even for complex manifolds that are non-K\"{a}hler, there often is a distinguished closed form.  For instance, on a complex hermitian threefold that satisfies the balanced condition, the square of the hermitian form is by definition a closed four-form.

For a manifold $(M, \psi)$, where $\psi\in \Om^\ell(M)$ is a $d$-closed $\ell$-form that represents a geometric structure, we seek invariants that are dependent on $\psi$.  Certainly, without $\psi$ being present, there is the well-known de Rham differential graded algebra $(\Om^*, d, \w)$ that results in basic invariants, i.e.~de Rham cohomology ring and Massey products, for any smooth manifold $M$.  So we are led to a simple question: Is there a natural extension of the de Rham algebra that can incorporate the additional geometric structure $\psi$?  In this paper, we begin by first pointing out that there is a mapping cone complex of differential forms that provides invariants that generally depends on $\psi$.  Though the notion of a mapping cone complex is widely used in homological algebra (see, for example, \cite{Weibel}), its use in the context of differential forms on general smooth manifolds, has not been much studied. 

\subsection{Mapping cone complex and cohomology of differential form}

We will motivate the construction of a mapping cone complex (or just called, the ``cone" complex) of differential forms as follows.   For differential forms, the exterior (or wedge) product is a natural operation.  Given a distinguished $d$-closed $\ell$-form $\psi$, we can think of $\psi$ not just as an element of $\Om^\ell(M)$, but additionally, as an operator or a ``map" on the space of differential forms via the wedge product, $\psi \w : \Om^k(M)\to \Om^{k+\ell}(M)$.  

Certainly, considering $\psi$ as an operator can be helpful.  For combining $\psi\w$ with the exterior differential $d$ give what is commonly called the ``twisted" differential, $d+\psi\w$. But there are two notable drawbacks if we were to consider  $d+\psi\w$ as the differential of a complex: (1) $d+\psi\w$ generally does not square to zero unless $\ell$ is odd; (2) $d+\psi\w$ also does not preserve grading as it maps $\Om^k(M)$ to the mixed degree sum $\Om^{k+1}(M) \oplus \Om^{k+\ell}(M)$.  

We can however give a simple solution to address these two issues.  For we can consider mapping both $d$ and $\psi \w$ at the same time into $\Om^{k+1}(M)$.  This would require changing the domain to be the direct sum $\Om^k(M) \oplus \Om^{k-\ell+1}(M)$ and mapping into $\Om^{k+1}(M)$ as follows:
\begin{equation*}
\begin{tikzcd}
\Om^k(M) \arrow[r, "d"]  & \Om^{k+1}(M)\\
&  \ \Om^{k-\ell+1}(M)\,.  \arrow[u, swap, "~\psi\, \w"]
\end{tikzcd}
\end{equation*}
This is suggestive of defining what we shall call the space of \textit{cone} forms, $\tC^\bullet(\psi)= \Om^\bullet(M) \oplus \Om^{\bullet-\ell+1}(M)$, which consists of \textit{pairs} of differential forms.  And with it, we introduce the cone differential:
\begin{align}\label{Cdef}
d_C: \tC^k(\psi) &\longrightarrow \tC^{k+1}(\psi)\\
\begin{pmatrix} \Om^k \\ \Om^{k-\ell+1} \end{pmatrix} &\longmapsto \begin{pmatrix} d & \psi\,\w  \\ 0 & (-1)^{\ell-1} d \end{pmatrix} \begin{pmatrix} \Om^{k} \\ \Om^{k-\ell+1} \end{pmatrix} = \begin{pmatrix} d\, \Om^k + \psi \w \Om^{k-\ell+1} \\ (-1)^{\ell-1} d\,\Om^{k-\ell+1} \end{pmatrix}. \nonumber
\end{align}
It is straightforward to check that $d_C\, d_C  = 0$.  And this immediately results in the following cone cohomology defined with respect to any $d$-closed form, $\psi\in \Om^\ell(M)$:
\begin{align}\label{mccoh}
H^k(\tC(\psi))  = \dfrac{\ker d_C \cap \tC^k(\psi)}{\im d_C \cap \tC^k(\psi)}\,.
\end{align}

Let us emphasize that this cone cohomology is not a topological invariant and generally depends on $\psi$.  In fact, the cohomology essentially contains the product structure information of the de Rham cohomology in relation to $[\psi]\in H^\ell_{dR}(M)$.  To see this, let us express $\tC^k(\psi)= \Om^k(M) \oplus \Om^{k-\ell+1}(M)$ in terms of an exact sequence
\begin{equation*}
\begin{tikzcd}
0 \arrow[r]  &
\Omega^k(M) \arrow[r,"\iota_{dR}"]  &
\tC^k(\psi)  \arrow[r,"\pi_{dR}"]  & \Omega^{k-\ell+1}(M) \arrow[r] & 0\,,
\end{tikzcd}
\end{equation*}
where $\iota_{dR}$ is the inclusion map and $\pi_{dR}$ is the projection onto the second component.  
This short exact sequence standardly leads to a long exact sequence of cohomologies
\begin{equation}
\begin{tikzcd} 
\ldots \arrow[r] & H_{dR}^{k-\ell} \arrow[r, "\text{[}\psi\text{]}"] & H_{dR}^{k} \arrow[r, 
"\text{[}\iota_{dR}\text{]}"] & H^{k}(\tC(\psi)) \arrow[r, "\text{[}\pi_{dR}\text{]}"] & H_{dR}^{k-\ell+1} \arrow[r, "\text{[}\psi\text{]}"] & H_{dR}^{k+1}\arrow[r] &\dots
\end{tikzcd}
\end{equation}
which implies that 
\begin{align}\label{hckk}
H^{k}(\tC(\psi))\cong \coker(\,[\psi]\!: H_{dR}^{k-\ell}\to H_{dR}^k\,)\,\oplus\,\, \ker(\,[\psi]\!: H_{dR}^{k-\ell+1}\to H_{dR}^{k+1}\,)\,.
\end{align}
Hence, we see that $H^k(\tC(\psi))$ encodes the product structure of the de Rham cohomology under the linear map $[\psi]: H_{dR}^\bullet \to H_{dR}^{\bullet + \ell}$.   And in general, the kernel and cokernel of such a map can vary as $[\psi]$ varies in de Rham cohomology.

Let us make here two observations.   First, the cone cohomology has appeared previously in the special case of a symplectic manifold $(M^{2n}, \om)$ where $\psi$ was specified to be $\psi=\om^{p+1}$, for $p=0, \ldots, n-1$.   In this special setting, Tanaka-Tseng \cite{TT} showed that 
$H(\tC(\om^{p+1}))$ is isomorphic to the symplectic cohomologies of differential forms called $p$-filtered cohomologies, $F^pH(M, \om)$, introduced by Tsai-Tseng-Yau \cite{TTY} and the underlying algebras of these two cohomologies are quasi-isomorphic as $A_\infty$ algebras.  Of interest, the dimensions of $F^pH(M, \om)$ have been shown to vary with the de Rham class of the symplectic structure in various examples: a six-dimensional symplectic nilmanifold \cite{TY2}, a three-torus product with a three-ball, $T^3 \times B^3$ \cite{TW}, and for classes of open symplectic 4-manifolds that are homeomorphic but not diffeomorphic \cite{GTV}.  By the isomorphism of $F^pH(M,\om)\cong H(\tC(\om^{p+1}))$, we know that $H(\tC(\om^{p+1}))$ would also vary with $[\om]$ in these examples.  

Our second observation is especially worthy to emphasize.  It is that the cone cohomology defined above only requires $\psi$ to be $d$-closed and nothing more.  Geometric structures such as a symplectic two-form or an associative three-form on $G_2$ manifolds often have an additional property like non-degeneracy besides $d$-closedness.  Certainly, when $\psi$ has more properties besides being closed, the cone complex may have additional structures as well.  For instance, if $\psi$ is an even-degree form and additionally an element of the integral cohomology $H^\ell(M, \mathbb{Z})$, then the cone cohomology interestingly can be interpreted geometrically as the de Rham cohomology of an $S^{\ell-1}$ sphere bundle over $M$ with Euler class given by $\psi$ \cite{TT}.  
Though we may have been initially motivated to seek invariants with respect to a geometric structure, with the cone complex at hand, it is useful to study the cone cohomology without imposing any additional condition on $\psi$.  
Below we shall turn our focus to developing a Morse theory for the cone cohomology $H(\tC(\psi))$ defined in \eqref{mccoh} with respect to any $d$-closed form, $\psi$.

\subsection{Morse complex for the mapping cone}

On a Riemannian manifold, the de Rham cohomology can be described alternatively as the cohomology of a Morse complex (or also referred to as the Morse-Witten or Smale-Thom complex).  Besides the Riemannian metric $g$, to define a Morse complex requires the introduction of a special function $f$ on $M$, called a Morse function, which is defined by the property that the Hessian at each critical point is non-degenerate.  The elements of the Morse complex $C^k(M,f)$ are then generated by the critical points of $f$, $q\in Crit(f)$, and grouped together by their index, $k=ind(q)$, the number of negative eigenvalues of the Hessian matrix at $q$. The differential of the complex, $\partial$, is defined by the gradient flow, $-\nabla f$, from one critical point to another.  Explicitly, in local coordinates $\{x^i\}$, the gradient flow is $\dot{x}^i(t) = -g^{ij}\partial f/\partial x^j$ which involves the Riemannian metric $g$. (We shall assume throughout this paper that the metric $g$ satisfies the usual Smale transversality condition, that is, the submanifolds that flow into or from the critical points are transverse.)  The resulting cohomology of the Morse cohomology is famously known to be isomorphic to the standard cohomology, and therefore, the Morse cohomology generally does not depend on the choice of the Morse function $f$ and  metric $g$ that are used to define it.  As a corollary of this isomorphism, there are the renown Morse inequalities which bound the Betti numbers of $M$ in terms of the number of critical points of the Morse function.

Now, for a smooth manifold equipped with a geometric structure described by a closed $\ell$-form $(M, \psi)$, we have discussed the cone cohomology $H(\tC(\psi))$ which provides basic geometrical invariants that are dependent on $[\psi]\in H^\ell_{dR}(M)$.  Given the connection between the mapping cone complex and the de Rham complex, it is natural to ask whether there is also a Morse theory-type description for the mapping cone cohomology?   Such a Morse description would necessarily require the involvement of $\psi$ in some intrinsic way.  And if a Morse theory for $(M, \psi)$ exists, can we bound the dimensions of the cone cohomology by means of the critical points of a Morse function and their gradient flows?

In this paper, we answer both questions in the affirmative.  

Motivated by the relationship between de Rham complex and the Morse cochain complex over $\mathbb{R}$ (see Table \ref{Tab0}), we define in the following a {\it cone} Morse complex also over $\mathbb{R}$.  
\begin{table}
\begin{center}
\begin{tabular}{ c | c | c |c  } 
Complex & Cochains & Differential &Cohomology \\
\hline
\hline
de Rham & $\Omega^\bullet(M)$ & $d$ & $H^\bullet_{dR}(M)$\\
\hline
Morse & $C^\bullet(M, f)$ & $\partial$  & $H^\bullet_{C(f)}(M)$\\
\hline\hline
Cone & $\Omega^\bullet(M)\oplus\Omega^{\bullet-\ell+1}(M)$ & $d_C =\begin{pmatrix} d& \psi\\ 0 & (-1)^{\ell-1}d\end{pmatrix}$&$H^\bullet(\tC(\psi))(M)$\\
\hline
Cone Morse & $C^\bullet(M, f) \oplus C^{\bullet-\ell+1}(M, f)$ &
$\partial_C=\begin{pmatrix} \partial & c(\psi) \\ 0 & (-1)^{\ell-1}\partial \end{pmatrix}$&$H^\bullet(\tC(c(\psi)))(M)$\\
\hline
\end{tabular} 
\end{center}
\caption{The relations between the de Rham and Morse cochain complexes and Cone and Cone-Morse complexes.}\label{Tab0}
\end{table}
\begin{defn}\label{Ccdef}
Let $(M, g)$ be an oriented, Riemannian manifold and $f$ a Morse function satisfying the Morse-Smale transversality condition.  Let $C^k(M,f)$ be the $\mathbb{R}$-module with generators the critical points of $f$ with index $k$.  Given a $d$-closed form $\psi\in \Om^\ell(M)$, we define the {\bf cone Morse cochain complex} of $\psi$, $\tC(c(\psi))=(C^\bullet(M,f)\oplus C^{\bullet-\ell+1}(M,f),\, \del_C)$:
\begin{equation*}
\begin{tikzcd}
\ldots \arrow[r, "\del_C"]  &
C^{k}(M,f)\oplus C^{k-\ell+1}(M,f) \arrow[r,"\del_C"] & C^{k+1}(M,f)\oplus C^{k-
\ell+2}(M,f) \arrow[r,"\del_C"]  & \ldots
\end{tikzcd}
\end{equation*}
with 
\begin{align}\label{delCdef}
\del_C= \begin{pmatrix} \del & c(\psi) \\ 0 & (-1)^{\ell-1}\del \end{pmatrix}\,.
\end{align}
Here, $\partial$ is the standard Morse cochain differential defined by gradient flow, and $c(\psi): C^k(M,f)\to C^{k+\ell}(M,f)$ acting on a critical point of index $k$ is defined to be
\begin{align}\label{cpsidef}
c(\psi)\,q_{k}=\sum_{ind(r)={k+\ell}}\left( \int_{\overline{\CM(r_{k+\ell}, q_k)}}\, \psi \right)r_{k+\ell}
\end{align}
where $\CM(r_{k+\ell}, q_k)$ is the $\ell$-dimensional submanifold of $M$ consisting of all flow lines from the index $k+\ell$ critical point, $r_{k+\ell}\,$, to $q_k$.
\end{defn}
\begin{table}
\begin{center}
\begin{tabular}{ c | c | c  } 
& de Rham  &  Morse  \\
\hline
Cochains & 
$\Omega^\bullet(M)$ & $C^\bullet(M, f)$ \\ 
\hline
 Differential & $d$ & $\partial$ (gradient flow)  \\
 \hline
Cohomology &   
\multicolumn{2}{c}{$H_{dR}^k(M) \cong H^k_{C(f)}(M)$}  \\
\hline
Morse & 
\multicolumn{2}{c}{$b_k \leq m_k$}  \\
Inequalities & 
\multicolumn{2}{c}{$\displaystyle\sum_{k=0}^{j} (-1)^{j-k}b_k \leq \sum_{k=0}^{j} (-1)^{j-k}m_k$} \\
\hline
\end{tabular} \\
\end{center}
\caption{The relations between the de Rham and Morse cochain complexes and Morse inequalities, where $b_k = \dim H^k_{dR}(M)$ and $m_k = \dim C^k(M, f)$}\label{Tab11}
\end{table}
\begin{table}
\begin{center}
\begin{tabular}{c| c | c } 
 &  Cone  & Cone Morse \\
\hline
Cochains & $
\Omega^\bullet(M)\oplus\Omega^{\bullet-\ell+1}(M)$& $C^\bullet(M, f) \oplus C^{\bullet-\ell+1}(M, f)$\\ 
\hline

Differential & $d_C =\begin{pmatrix} d& \psi\\ 0 & (-1)^{\ell-1}d\end{pmatrix}$ &   $\partial_C=\begin{pmatrix} \partial & c(\psi) \\ 0 & (-1)^{\ell-1}\partial \end{pmatrix}$ \\
 \hline
Cohomology &   \multicolumn{2}{c}{ $
H^k(\tC(\psi))(M) \cong H^k(\tC(c(\psi)))(M)$} 
\\
\hline
Cone-Morse & \multicolumn{2}{c}{ $b_k^\psi 
\leq  m_{k}-v_{k-\ell}+m_{k-\ell+1}-v_{k-\ell+1}$} \\
Inequalities   &  \multicolumn{2}{c}{$\displaystyle\sum_{k=0}^{j} (-1)^{j-k}b^\psi_k \leq\displaystyle\sum_{k=0}^{j} (-1)^{j-k}(m_k - v_{k-\ell}+m_{k-\ell+1}-v_{k-\ell+1})$} \\
\hline
\end{tabular}
\end{center}
\caption{The relations between the cone complex and the cone Morse complex and  inequalities in the presence of a closed $\ell$-form $\psi$, where $b^\psi_k=\dim H^k(\tC(\psi))(M)$ and $v_k=\rk [\cpsi: C^k(M, f) \to C^{k+\ell}(M,f)]$. }
\label{Tab22}
\end{table}

Notice that the elements of the Morse cone complex, $\text{Cone}^k(c(\psi))=C^k(M,f)\oplus C^{k-l+1}(M,f)$, can be generated by {\it pairs} of critical points, of index $k$ and $k-l+1$.  The differential $\del_C$ consists of the standard Morse differential $\partial$ from gradient flow coupled with the $c(\psi)$ map which involves an integration of $\psi$ over the space of gradient flow lines.  This $c(\psi)$ map has appeared in Austin-Braam \cite{AB} and Viterbo \cite{Viterbo} to define a cup product on Morse cohomology.  It satisfies the following Leibniz rule
\begin{align}\label{MLeibniz}
\del c(\psi)+(-1)^{\ell+1}c(\psi)\del=-c(d\psi)\,.
\end{align}
A check of the $\pm$ signs of this equation together with a description of the orientation of $\overline{\CM(r_{k+\ell+1}, q_k)}$ is given in Appendix A.  With \eqref{MLeibniz} and $\del \del=0\,,$ they together imply $\del_C\,\del_C=0$.  

We will prove in Section 2 that the cohomology of our cone Morse complex $\tC(c(\psi))$ is isomorphic to the cohomology of the cone complex $\tC(\psi)$ of differential forms.  
\begin{thm}\label{MIso}
Let $M$ be a closed, oriented Riemannian manifold and $\psi\in \Om^l(M)$ a $d$-closed form.  There exists a chain map $\mP_{C}:(\tC^\bullet(\psi), d_C)\to (\tC^\bullet(c(\psi)), \del_C)$ that is a quasi-isomorphism, and therefore, for any $k\in  \mathbb{Z}$,
\begin{align*}
H^k(\tC(\psi)) \cong H^k(\tC(c(\psi)))\,.
\end{align*}
\end{thm}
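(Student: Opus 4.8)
The plan is to lift the classical de Rham--Morse quasi-isomorphism to the level of mapping cones. Observe first that $\tC(\psi)$ is the mapping cone of the chain map $\psi\w\colon(\Om^{\bullet-\ell}(M),d)\to(\Om^\bullet(M),d)$, while $\tC(\cpsi)$ is the mapping cone of $\cpsi\colon(C^{\bullet-\ell}(M,f),\del)\to(C^\bullet(M,f),\del)$, each assembled with the degree shift and signs recorded in \eqref{Cdef} and \eqref{delCdef}. On each summand I would use the integration map $\mP\colon(\Om^\bullet(M),d)\to(C^\bullet(M,f),\del)$ of Morse theory (Harvey--Lawson; Austin--Braam \cite{AB}), defined on a critical point $q$ with $ind(q)=k$ by $\mP(\alpha)(q)=\int_{\overline{W^u(q)}}\alpha$ for $\alpha\in\Om^k(M)$, where $\overline{W^u(q)}$ is the compactified $k$-dimensional unstable manifold. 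It is classical that $\mP$ is a chain map and a quasi-isomorphism, so it already identifies $H^\bullet_{dR}(M)$ with $H^\bullet_{C(f)}(M)$ on each factor.

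The crux is to compare, under $\mP$, wedging by $\psi$ with the Morse operator $\cpsi$. I would prove the cochain-level homotopy identity
\[
\mP(\psi\w\alpha)-\cpsi\,\mP(\alpha)=\del\,H(\alpha)\pm H(d\alpha)
\]
for an operator $H\colon\Om^\bullet(M)\to C^{\bullet+\ell-1}(M,f)$ of degree $\ell-1$, with the sign fixed by the orientation conventions of Appendix A; this refines, at the cochain level, the known fact that $\cpsi$ represents cup product by $[\psi]$ and is compatible with the Leibniz relation \eqref{MLeibniz}. The operator $H$ and the identity are to be obtained from a Stokes argument on a compactified space of gradient trajectories carrying one free interior point whose image is integrated against $\psi\w\alpha$: the two codimension-one boundary contributions reproduce $\mP(\psi\w\alpha)$ (the marked point reaching the unstable manifold of the top critical point $r$) and $\cpsi\,\mP(\alpha)$ (trajectories breaking through an intermediate $q$, whose contribution factors as $\int_{\overline{\CM(r,q)}}\psi$ times $\int_{\overline{W^u(q)}}\alpha$ by Fubini), while $d\psi=0$ removes the interior term. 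Establishing transversality and compactness for this configuration space, and matching all induced orientations to the signs of \eqref{MLeibniz}, is where essentially all the work lies and is the main obstacle.

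With the homotopy in hand I would define the cone map
\[
\mP_C=\begin{pmatrix}\mP & \pm H\\[2pt] 0 & \mP\end{pmatrix}\colon\tC^\bullet(\psi)\to\tC^\bullet(\cpsi)
\]
and verify $\mP_C\,d_C=\del_C\,\mP_C$ by a block computation: the diagonal blocks reduce to the chain-map property $\mP\,d=\del\,\mP$, while the upper-right block is precisely the homotopy identity above, so that the off-diagonal term $H$ exactly absorbs the discrepancy between $\mP(\psi\w\,\cdot\,)$ and $\cpsi\,\mP$. Thus $\mP_C$ is a genuine chain map.

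Finally, to promote $\mP_C$ to a quasi-isomorphism I would invoke the five lemma. The short exact sequence $0\to\Om^\bullet(M)\xrightarrow{\iota_{dR}}\tC^\bullet(\psi)\xrightarrow{\pi_{dR}}\Om^{\bullet-\ell+1}(M)\to0$ and its Morse analogue $0\to C^\bullet(M,f)\to\tC^\bullet(\cpsi)\to C^{\bullet-\ell+1}(M,f)\to0$ are intertwined by $\mP_C$: since $\iota$ lands in the first summand and $\pi$ reads off the second, the off-diagonal $H$ is invisible and both squares commute on the nose. Passing to the associated long exact sequences, $\mP$ is an isomorphism on the sub- and quotient cohomologies, and the connecting homomorphisms are $[\psi]$ and $[\cpsi]$, which agree under $\mP$ by the cohomological shadow of the homotopy identity of the second step. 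The five lemma then yields $H^k(\tC(\psi))\cong H^k(\tC(\cpsi))$ for all $k$, as claimed.
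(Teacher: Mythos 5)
Your overall architecture coincides with the paper's: an upper-triangular chain map $\begin{pmatrix}\mP & H\\ 0 & \mP\end{pmatrix}$ whose off-diagonal entry is a chain homotopy between $\mP(\psi\w\cdot)$ and $\cpsi\,\mP$, followed by the Five Lemma applied to the map of long exact sequences coming from the two short exact sequences of cone complexes. The endgame is identical (and your observation that the squares involving $\iota$ and $\pi$ commute on the nose because $H$ is invisible to them is exactly the check the paper performs). Where you diverge is in how the homotopy $H$ is produced, and this is also where your proposal is incomplete: you propose to build $H$ geometrically from a compactified space of gradient trajectories carrying a free marked point, and you correctly flag that the transversality, compactness and orientation analysis for that configuration space is "where essentially all the work lies" --- but you do not carry it out, so as written the central ingredient is missing. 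The paper avoids this hard geometry entirely by a soft construction: it takes as input only the already-known cohomological identity $[\mP][\psi]=[\cpsi][\mP]$ of Austin--Braam, and then manufactures the homotopy $K$ explicitly as
\begin{align*}
K\xi=(-1)^{\ell}\left(\mP\psi-\cpsi\mP\right)d^*G\xi\,+\,\del_{k,\perp}^{-1}\left(\left(\mP\psi-\cpsi\mP\right)\mathcal{H}\xi\right),
\end{align*}
using the Hodge decomposition $\xi=\mathcal{H}\xi+dd^*G\xi+d^*dG\xi$ on the de Rham side and, on the harmonic (hence closed) part, the fact that $(\mP\psi-\cpsi\mP)\mathcal{H}\xi$ is $\del$-exact so that a right inverse $\del_{k,\perp}^{-1}$ of the finite-dimensional Morse differential can be applied. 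The identity $\del K+(-1)^{\ell}Kd=\mP\psi-\cpsi\mP$ then follows by a short computation using $\del\mP=\mP d$ and $\del\cpsi=(-1)^{\ell}\cpsi\del$. So your geometric route, while plausible and potentially of independent interest (it would give a homotopy defined by counting, in the spirit of the $A_\infty$ structures in Austin--Braam), is not needed; to close the gap in your argument you could simply replace the marked-point moduli space by this Hodge-theoretic construction.
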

Theorem \ref{MIso} importantly shows that the cohomology of the cone Morse complex is independent of the choice of both the Morse function $f$ and the Riemannian metric $g$ used to define $\tC(c(\psi))$.  It is also worthwhile to emphasize that the above theorem is a general one, applicable for any closed smooth manifold, odd or even dimensional, with respect to any closed differential form on the manifold.

Having obtained a cone Morse theory, we would like to write down the Morse-type inequalities that bounds the dimension of the cone cohomology which we will denote by $b^\psi_k = \dim H^k(\tC(\psi))$.  Specifically, we would like to bound the $b^\psi_k$'s by the properties of the Morse functions.  Recall that the standard Morse inequalities (for a reference, see e.g.~\cite{Milnor}) bounds the $k$-th Betti number $b_k = \dim H_{dR}^k(M)$ by $m_k$, the number of index $k$ critical points of a Morse function.  The usual Morse inequalities can be stated concisely as the existence of a polynomial $Q(t)$ with non-negative integer coefficients such that 
\begin{align}\label{sm1}
\sum_{k=0} m_k\, t^k = \sum_{k=0} b_k\, t^k \,+\, (1+t)\, Q(t)\,.
\end{align}
This is equivalent to what is called the strong Morse inequalities 
\begin{align}\label{sm2}
\sum_{i=0}^k (-1)^{k-i}\,b_i \, \leq \,\sum_{i=0}^k (-1)^{k-i}\,m_i\,,\qquad k=0, \ldots, \dim M
\end{align}
which imply the weak Morse inequalities
\begin{align}\label{wm1}
b_k \leq m_k\,,
\end{align}
also for $k=0, \ldots, \dim M$.  

We can derive the analogous Morse-type inequalities results for the cone cohomology.   We obtain the following:
\begin{thm}\label{CMineq}
Let $(M, \psi, f, g)$ be a closed, oriented Riemannian manifold with Morse function $f$, Riemannian metric $g$, and $\psi\in\Om^\ell(M)$ a $d$-closed form.   Then
there exists a polynomial $Q(t)$ with non-negative integer coefficients such that
\begin{align*}
(1+t^{\ell-1})\sum_{k=0} m_k\, t^k - (t^{\ell-1}+t^{\ell})\sum_{k=0} v_k\, t^k= \sum_{k=0} b^{\psi}_k\, t^k \, +\, (1+t)\, Q(t) 
\end{align*}
where $b^\psi_k=\dim H^k(\tC(\psi))$ and 
\begin{align}\label{Ivkdef}
v_k=\rk\left(\cpsi:C^{k}(M,f) \to C^{k+\ell}(M,f)\right)\,.
\end{align}
Equivalently, we have the following inequalities:
\begin{itemize}
\item[(A)] Weak cone Morse inequalities 
\begin{align}\label{Iwcm}
b_k^{\psi} \ \leq \,
m_k-v_{k-\ell}+m_{k-\ell+1}-v_{k-\ell+1}\,, \quad\quad  k = 0, \dots,\dim M + \ell -1;
\end{align}
\item[(B)] Strong cone Morse inequalities
\begin{align}\label{Iscm}
\sum_{k=0}^j (-1)^{j-k}b^\psi_k \leq 
\sum_{k=0}^j (-1)^{j-k} (m_k-v_{k-\ell}+m_{k-\ell+1}-v_{k-\ell+1}) \,,
\end{align}
for $ j = 0, \dots,\dim M + \ell -1\,$.
\end{itemize}
\end{thm}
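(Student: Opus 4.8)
The plan is to deduce the stated polynomial identity from the general algebraic Morse identity applied to the cone Morse complex $\tC(c(\psi))$, and then to read off the weak and strong inequalities by the standard telescoping argument. The only geometric input is Theorem~\ref{MIso}; everything else is linear algebra of finite-dimensional complexes over $\mathbb{R}$.

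First I would recall the purely algebraic structure of a bounded cochain complex $(V^\bullet, D)$ of finite-dimensional $\mathbb{R}$-vector spaces: writing $c_k = \dim V^k$, $h_k = \dim H^k(V^\bullet)$, and $r_k = \rk(D\colon V^k \to V^{k+1})$, rank–nullity $c_k = \dim\ker D_k + r_k$ together with $h_k = \dim\ker D_k - r_{k-1}$ gives $c_k = h_k + r_{k-1} + r_k$, hence
\[
\sum_k c_k\, t^k = \sum_k h_k\, t^k + (1+t)\sum_k r_k\, t^k ,
\]
where the rank polynomial $\sum_k r_k t^k$ automatically has non-negative integer coefficients. I would apply this to $V^\bullet = \tC^\bullet(c(\psi))$, whose cochain groups have dimension $\dim \tC^k(c(\psi)) = m_k + m_{k-\ell+1}$ and whose cohomology, by Theorem~\ref{MIso}, has dimension $b^\psi_k = \dim H^k(\tC(\psi))$. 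Writing $R_j = \rk(\del_C\colon \tC^j \to \tC^{j+1})$ and using $\sum_k m_{k-\ell+1} t^k = t^{\ell-1}\sum_k m_k t^k$, this yields
\[
(1+t^{\ell-1})\sum_k m_k\, t^k = \sum_k b^\psi_k\, t^k + (1+t)\sum_j R_j\, t^j .
\]
Since $t^{\ell-1} + t^\ell = t^{\ell-1}(1+t)$, subtracting $t^{\ell-1}(1+t)\sum_k v_k t^k$ from both sides and factoring out $(1+t)$ reproduces exactly the claimed identity with
\[
Q(t) = \sum_j R_j\, t^j - t^{\ell-1}\sum_k v_k\, t^k = \sum_j \bigl(R_j - v_{j-\ell+1}\bigr)\, t^j .
\]

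The crux — the step I expect to require the most care — is to show that this $Q$ has non-negative integer coefficients, i.e.\ that $R_j \ge v_{j-\ell+1}$ for every $j$. Here the key observation is that the map $c(\psi)$ of \eqref{Ivkdef} sits inside $\del_C$ as the off-diagonal block of \eqref{delCdef}: composing the inclusion $\iota\colon C^{j-\ell+1}(M,f)\hookrightarrow \tC^j$ of the second summand, $b \mapsto (0,b)$, with $\del_C$ and with the projection $\pi\colon \tC^{j+1}\twoheadrightarrow C^{j+1}$ onto the first summand gives $\pi\,\del_C\,\iota = c(\psi)\colon C^{j-\ell+1}\to C^{j+1}$. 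Since composing a linear map with arbitrary maps on either side cannot increase its rank, $v_{j-\ell+1} = \rk(c(\psi)) = \rk(\pi\,\del_C\,\iota)\le \rk(\del_C) = R_j$. As $R_j$ and $v_{j-\ell+1}$ are both ranks, their difference is a non-negative integer, so $Q$ has the required coefficients and the polynomial identity is established.

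Finally, the passage to the inequalities is the standard equivalence. Writing $Q(t) = \sum_j q_j t^j$ with $q_j \ge 0$ and denoting by $p_k = m_k - v_{k-\ell} + m_{k-\ell+1} - v_{k-\ell+1}$ the coefficient of $t^k$ on the left-hand side, the identity reads $p_k - b^\psi_k = q_k + q_{k-1}$. The weak cone Morse inequalities $b^\psi_k \le p_k$ follow at once from $q_k, q_{k-1}\ge 0$, while the alternating sum telescopes, $\sum_{k=0}^j (-1)^{j-k}(p_k - b^\psi_k) = q_j \ge 0$, giving the strong cone Morse inequalities; the stated range $0 \le k \le \dim M + \ell - 1$ is exactly the range in which the cochain groups $\tC^k(c(\psi)) = C^k(M,f)\oplus C^{k-\ell+1}(M,f)$ are nonzero, outside of which both $p_k$ and $b^\psi_k$ vanish. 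Conversely, the strong inequalities allow one to reconstruct such a $Q$ with non-negative coefficients, so the polynomial identity and the two sets of inequalities are equivalent.
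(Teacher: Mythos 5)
Your proof is correct, and it takes a genuinely different route from the paper's. The paper never works with the ranks of $\del_C$ directly: it proves the weak inequality by bounding $b^\psi_k$ through the long exact sequence
$\cdots \to H^{k-\ell+1}(\ker\cpsi) \to H^{k}(\tC(\cpsi))\to H^{k}(\coker\cpsi) \to \cdots$
relating the cone Morse cohomology to the cohomologies of the kernel and cokernel subcomplexes of $\cpsi$ (so that $b^\psi_k \le \dim H^{k}(\coker\cpsi)+\dim H^{k-\ell+1}(\ker\cpsi) \le \dim\coker^k\cpsi+\dim\ker^{k-\ell+1}\cpsi$), and it proves the strong inequality by splitting that long exact sequence into three short exact sequences and applying the classical alternating-sum bound to the kernel and cokernel complexes separately; the polynomial identity is then only obtained implicitly via the standard equivalence with the strong inequalities. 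You instead apply the classical identity $\sum_k c_k t^k=\sum_k h_k t^k+(1+t)\sum_k R_k t^k$ to the full complex $\tC^\bullet(\cpsi)$ and exhibit $Q(t)=\sum_j (R_j-v_{j-\ell+1})t^j$ explicitly, with non-negativity coming from the clean observation that $\cpsi=\pi\,\del_C\,\iota$ is a block of $\del_C$, hence $v_{j-\ell+1}\le R_j$. Your argument is shorter, avoids Appendix B entirely, and establishes the polynomial form of the theorem directly; the paper's decomposition buys the intermediate quantities $\dim H^k(\coker\cpsi)$ and $\dim H^{k-\ell+1}(\ker\cpsi)$, which are what the authors use to show the inequalities become equalities for a perfect Morse function (Corollary \ref{wperf}). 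One small point: your closing remark that the strong inequalities conversely reconstruct $Q$ with non-negative coefficients tacitly uses that the total alternating sums of $b^\psi_k$ and of $m_k-v_{k-\ell}+m_{k-\ell+1}-v_{k-\ell+1}$ agree (the Euler-characteristic identity for the finite complex $\tC^\bullet(\cpsi)$); since you prove the identity directly this is harmless, but it is worth stating if you want the claimed equivalence to be complete.
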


\medskip

For the cone Morse inequalities \eqref{Iwcm}-\eqref{Iscm}, it is worth pointing out that the dimension $b^\psi_k$ on the left-hand-side is dependent only on the class $[\psi]\in H^\ell_{dR}(M)$ as evident from \eqref{hckk}.  However, the right-hand-side, in particular the $v_k$'s, has dependence on $\psi$, as a differential form, and not just on the de Rham class $[\psi]$.  (A simple example of the cone Morse bound varying when $\psi$ is varied within a fixed cohomology class is give in Example \ref{ex3}.)  Hence, it is worthwhile in the context of cone Morse inequalities to consider $\psi$ as a $d$-exact form.  In the special case where $\psi$ is a $d$-exact two-form, the strong cone Morse inequalities of \eqref{Iscm} turn out to imply an interesting bound for the difference between the number of critical points of a Morse function and the Betti numbers.  
\begin{cor}\label{bmv}
For $\psi\in \Om^2(M)$ a $d$-exact form, we have the following bounds for $k=1, \ldots, \dim M $, 
\begin{align}\label{bmvIneq}
b_k \leq m_k - v_{k-1}\,. 
\end{align}
\end{cor}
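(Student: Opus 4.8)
The plan is to deduce \eqref{bmvIneq} directly from the strong cone Morse inequalities \eqref{Iscm}, exploiting the drastic simplification that occurs when $\psi$ is $d$-exact. First I would observe that since $\psi$ is exact, the class $[\psi]\in H^\ell_{dR}(M)$ vanishes, so every connecting map $[\psi]\colon H^{\bullet-\ell}_{dR}\to H^\bullet_{dR}$ appearing in the decomposition \eqref{hckk} is the zero map. Then the cokernel term is the full target $H^k_{dR}$ and the kernel term is the full source $H^{k-\ell+1}_{dR}$, so that $H^k(\tC(\psi))\cong H^k_{dR}\oplus H^{k-\ell+1}_{dR}$. Specializing to $\ell=2$ gives the key identity $b^\psi_k=b_k+b_{k-1}$, with the convention $b_j=0$ for $j<0$.

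Next I would substitute this identity into \eqref{Iscm} with $\ell=2$. The left-hand side becomes $\sum_{k=0}^j(-1)^{j-k}(b_k+b_{k-1})$, and the right-hand side becomes $\sum_{k=0}^j(-1)^{j-k}\big[(m_k+m_{k-1})-(v_{k-2}+v_{k-1})\big]$. The whole argument then rests on the elementary telescoping identity
\begin{align*}
\sum_{k=0}^j(-1)^{j-k}(a_k+a_{k-1})=a_j\,,
\end{align*}
valid for any sequence with $a_{-1}=0$, which I would verify by shifting the index in the $a_{k-1}$ sum so that every term with $0\le k\le j-1$ cancels in pairs, leaving only $a_j$.

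Applying this identity three times finishes the proof. With $a_k=b_k$ the left-hand side collapses to $b_j$. With $a_k=m_k$ the critical-point piece of the right-hand side collapses to $m_j$. And with $a_k=v_{k-1}$ — so that $a_{k-1}=v_{k-2}$, matching the two shifted rank terms, and $a_{-1}=v_{-2}=0$ since the Morse cochain groups vanish in negative degree — the rank piece collapses to $v_{j-1}$. This yields $b_j\le m_j-v_{j-1}$, which after renaming $j$ as $k$ is exactly \eqref{bmvIneq}; the stated range $k=1,\dots,\dim M$ is inherited from the range of validity of \eqref{Iscm}, the case $k=0$ reducing merely to the classical bound $b_0\le m_0$.

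I do not anticipate any genuine analytic obstacle, since Theorem \ref{CMineq} already supplies the strong inequalities; the only thing to be careful about is the bookkeeping of the index shifts, in particular matching the two distinct terms $v_{k-2}$ and $v_{k-1}$ to a single telescoped output $v_{j-1}$, and confirming the boundary conventions $b_{-1}=m_{-1}=v_{-2}=0$ so that the telescoping is clean. The conceptual content is simply the observation that for exact $\psi$ the cone cohomology dimension $b_k+b_{k-1}$ has exactly the same ``$a_k+a_{k-1}$'' shape as the $m$- and $v$-contributions, which is precisely what makes all three alternating sums collapse to a single degree; since $v_{k-1}\ge 0$, the resulting bound \eqref{bmvIneq} sharpens the classical weak Morse inequality $b_k\le m_k$.
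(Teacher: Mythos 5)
Your proposal is correct and follows essentially the same route as the paper: the paper proves the more general bound $v_{k-1}-r_{k-1}\leq m_k-b_k$ (Corollary \ref{l2e}) by telescoping the alternating sums in \eqref{bpsik} and \eqref{Iscm} for $\ell=2$, and then sets $r_k=0$ for exact $\psi$, whereas you set $r_k=0$ from the outset — the telescoping computation and boundary conventions are identical.
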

This corollary gives an estimate for the difference between $m_k$ and $b_k$ in terms of $v_{k-1}$, which involves integrating $\psi$ over gradient flow spaces.  It represents an improvement to the classical Morse inequality \eqref{wm1} and can be used to quickly determine whether a Morse function is perfect or not.  (Recall a perfect Morse function is one where $m_k=b_k$ for all $k$.)  In considering the inequality \eqref{bmvIneq}, note that it applies for any exact two-form $\psi = d\alpha$ where $\alpha \in \Om^1(M)$.  Then $v_{k-1}$ can only be non-zero if at least one moduli space of flow lines $\overline{\CM(r_{k+1}, q_{k-1})}$, which is the domain of integration in \eqref{cpsidef}, has a boundary.  For if one $\partial{\overline{\CM}}\neq 0$, then we can choose to work with a one-form $\alpha$ that takes value only along a small localized region along the boundary, such that the boundary integral of $\alpha$ is non-zero and thus generates $v_{k-1}>0$.  

Of course, not all manifolds have perfect Morse functions.  For instance, Morse functions on manifolds that has torsion in its homology class must satisfy the inequalities \cite{Pitcher}
\[ b_k + \mu_k + \mu_{k-1}   \leq  m_k\]
where $\mu_k$ is the minimum number of generators of the torsion components of $H_k(M,\mathbb{Z})$.  Hence, our results imply that a manifold with torsion must have a moduli space of flow lines $\overline{\CM(r_{k+1}, q_{k-1})}$ that has a non-trivial boundary.

The outline of this paper is as follows.  In Section 2, we define our cone Morse complex in detail and proof the isomorphism between the cone and cone Morse cohomology of Theorem \ref{MIso}.  In Section 3, we study the implication of the isomorphism and derive the cone Morse inequalities of Theorem \ref{CMineq}, and also Corollary \ref{bmv}.  In addition, we show that when $f$ is a perfect Morse function, the cone Morse inequalities \eqref{Iwcm}-\eqref{Iscm} become equalities.  In Section 4, we demonstrate the various properties of the cone Morse cohomology and inequalities in the simple, yet rich example of the two-sphere $S^2$.  In Appendix A, we gather our conventions for defining Morse theory and carefully prove the Leibniz rule relation given in \eqref{MLeibniz}.  In Appendix B, we review the various complexes that can arise when there exists a chain map between cochain complexes.  And in Appendix C, we briefly visit the question of giving a differential graded algebra (DGA) structure for the cone complex.  We describe a natural DGA for the cone complex in the case when $\psi$ is an even degree form and introduce a modified DGA when $\psi$ is an odd form.   Lastly, we mention that in a separate companion paper to this work \cite{CTT2}, we study some analytic aspects of the cone complex  in the special case where $\psi$ is a symplectic structure.  We present there an analysis of the symplectic cone Laplacian operator and used the analytic Witten deformation method to study the symplectic cone Morse theory.

\



\noindent{\it Acknowledgements.~} 
We thank Poom Lertpinyowong, Yu-Shen Lin, Daniel Morrison, Richard Schoen, Daniel Waldram, and Jiawei Zhou for helpful discussions.  The second author was supported in part by NSF Grants DMS-1800666 and DMS-1952551.  The third author would like to acknowledge the support of the Simons Collaboration Grant No. 636284.


\section{Cone Morse complex: Cone(c($\psi$))}

\subsection{Preliminaries: Morse complex and $c(\psi)$}
To begin, let $f$ be a Morse function and $g$ a Riemannian metric on $M$. We will assume throughout this paper that  $(f,g)$ satisfy the standard Morse-Smale transversality condition.  The elements of the Morse cochain complex $C^\bullet(M,f)$ are $\mathbb{R}$-modules with generators critical points of $f$, graded by the index of the critical points, with boundary operator $\del$ determined by the counting of gradient lines, i.e 
$$\del q_k=\displaystyle \sum_{ind(r)=k+1}n(r_{k+1}, q_k)\,r_{k+1}
$$
where $n(r_{k+1}, q_k)=\#\widetilde{\CM}(r_{k+1}, q_k)$ is a count of the moduli space of gradient flow lines with orientation modulo reparametrization. 

Note that Morse theory is typically presented as a homology theory, and hence, flowing from index $k$ to index $k-1$ critical points. To match up with the cochain complex of differential forms, we here work with the dual Morse cochain complex.  Hence, our $\del$ is the adjoint of the usual Morse boundary map under the inner product $\langle q_{k_i}, q_{k_j}\rangle = \delta_{ij}$. 

Following Austin-Braam \cite{AB} and Viterbo \cite{Viterbo}, we define 
\begin{align*}
c(\psi)q_k=\displaystyle \sum_{ind(r)=k+\ell}\left(\int_{\overline{\CM(r_{k+\ell}, q_{k})}} \psi\right)r_{k+\ell}
\end{align*} 
where $\psi \in \Omega^\ell(M)$ is an $\ell$-form and $\CM(r_{k+\ell}, q_k)$ is the submanifold of all points that flow from $r_{k+\ell}$ to $q_k$, oriented as in \cite{AB}. 
From Appendix A equation \eqref{Alemma}, we have the Leibniz-type product relation 
\[\del c(\psi)+(-1)^{\deg(\psi)+1}c(\psi)\del=-c(d\psi)\]
specifying a sign convention that is ambiguous in Austin-Braam \cite{AB} and Viterbo \cite{Viterbo}. 
 Thus, for instance, for $\psi\in \Om^\ell(M)$ a $d$-closed form, we have the relation 
 \begin{align}
 \del \com=(-1)^\ell \com\del\,.
 \end{align}


\subsection{Chain map between $\tC(\psi)$ and $\tC(\cpsi)$}
As explained by Bismut, Zhang and Laudenbach \cites{BZ, Zhang}, there is a chain map $\mP:\Omega^k(M) \to C^k(M,f)$ between the de Rham complex and the Morse cochain complex given by
\[ \mP\phi= \sum_{q_k \in Crit(f)} \left( \int_{\overline{U_{q_k}}} \phi\right)q_k \]
where $\phi\in \Om^k(M)$ and  $U_q$ is the set of all points on a gradient flow away from $q$. Being a chain map, 
\begin{align}\label{Pchain}
\del\, \mP=\mP\, d\,.
\end{align}
Bismut, Zhang and Laudenbach, in \cite{BZ}*{Theorem 2.9} (see also  \cite{Zhang}*{Theorem 6.4}), also proved the following: 
\begin{align}\label{Pisom}
\mP:H^k_{dR}(M) \to H^k_{C(f)}(M) \text{ is an isomorphism.} 
\end{align}
Furthermore, Austin-Braam \cite{AB}*{Section  3.5} showed that 
$\mP( \psi \wedge \gamma)$ and $c(\psi)\mP \gamma$  are cohomologous: 
\begin{align}\label{c()coh}
[\mP][\psi]=[c(\psi)][\mP]\,.
\end{align}
Motivated by these results, we wish to find an analogous chain map relating $\text{Cone}(\psi)= (\Om^\bullet(M) \oplus \, \Om^{\bullet-\ell+1}(M),\, d_C)$ with  $\tC(\cpsi)=(C^\bullet(M,f)\oplus C^{\bullet-\ell+1}(M,f),\, \del_C)$, where as given in \eqref{Cdef} and Definition \ref{Ccdef},
\begin{align*}
d_C&: \Om^k(M)\oplus  \Om^{k-\ell+1}(M)\to \Om^{k+1}(M)\oplus \Om^{k-\ell+2}(M)\\
\del_C&: C^k(M,f)\oplus C^{k-\ell+1}(M,f)\to C^{k+1}(M,f)\oplus \, C^{k-\ell+2}(M,f)
\end{align*}
with
\begin{align*}
d_C=\begin{pmatrix} d & \psi \\ 0 & (-1)^{\ell-1} d \end{pmatrix}\,,\qquad
\del_C= \begin{pmatrix} \del & c(\psi) \\ 0 & (-1)^{\ell-1}\del \end{pmatrix}\,.
\end{align*}
Such a chain map linking the two cone complexs,  which we will label by $\mP_C$,  must satisfy $\del_C \, \mP_C = \mP_C \, d_C$.  In fact, such a map exists and can be expressed in an upper-triangular matrix form.  
\begin{defn}
Let $\mP_C: \tC^\bullet(\psi)\to \tC^\bullet(c(\psi))$ be the upper-triangular matrix map
\[\mP_{C}=\begin{pmatrix} \mP& K \\ 0 & \mP\end{pmatrix}\]
where $K:\Omega^{k-\ell+1}(M) \to C^k(M, f)$, and acting on 
$\xi \in \Omega^{k-\ell+1}(M)$, $K$ is defined by 
\begin{align}\label{Kdef}
K\xi=(-1)^{\ell}\left(\mP\psi-\cpsi\mP\right)d^*G\xi\,+\,\del_{k, \perp}^{-1}\left(\left(\mP\psi-\cpsi\mP\right) \mathcal{H} \xi\,\right)\,,
\end{align}
expressed in terms of the Hodge decomposition with respect to the de Rham Laplacian $\Delta = dd^*+d^*d$:
\begin{align}\label{hodged}
\xi=(\mathcal{H}+\Delta G)\xi= \mathcal{H}\xi+dd^*G\xi+d^*dG\xi\,,
\end{align}
with $\mathcal{H}\xi$ denoting the harmonic component of $\xi$, and $G$, the Green's operator.
\end{defn}
Let us explain the notation $\del_{k, \perp}^{-1}$ in the second term of the definition for $K$ in \eqref{Kdef}.  Let $\gamma$ be a closed $(k-\ell+1)$-form.  Then from \eqref{c()coh}, we know that $\mP (\psi \wedge \gamma)$ and $c(\psi)\mP \gamma$  are cohomologous, and 
therefore, $\mP( \psi \wedge \gamma)-c(\psi)\mP \gamma=\del b$ for some $b \in C^{k}(M, f)$.  Note that $C^k(M, f)$ is an inner product space under $\langle q_{k_i}, q_{k_j}\rangle = \delta_{ij}$, so we have an orthogonal splitting, $C^{k}(M, f)=\ker\del_{k} \oplus (\ker\del_{k})^\perp$, and that $\del_k$ gives an isomorphism between $\left(C^{k}(M,f)/\ker\del_k\right)\cong (\ker\del_k)^\perp $ and  $\im\del_k\subset C^{k+1}(M,f)$. Thus, it follows from the finite-dimensional assumption on $C^k(M,f)$ and $C^{k+1}(M,f)$ that we can define a right inverse $\del_{k,\perp}^{-1}:\im \del_k \to (\ker\del_{k})^\perp\subset C^{k}(M, f)$, and $\del_{k, \perp}^{-1}(\mP( \psi \wedge \gamma)-c(\psi)\mP \gamma) \in C^{k}(M, f)$.  

With $\mP_C$ defined, we now show that it is a chain map.
 \begin{thm}
 $\mP_{C}:\tC^\bullet(\psi) \to \tC^\bullet(\cpsi)$ is a chain map.  In particular,
 \begin{align}\label{PCchain}
 \del_C\,\mP_{C}=\mP_{C}\,d_C\,.
 \end{align}
 \end{thm}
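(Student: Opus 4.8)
The plan is to reduce the matrix identity \eqref{PCchain} to a single off-diagonal block relation and then verify that relation from the definition of $K$ together with Hodge theory. First I would multiply out the two composites. Writing
\[
\del_C\mP_C = \begin{pmatrix} \del\mP & \del K + \cpsi\mP \\ 0 & (-1)^{\ell-1}\del\mP\end{pmatrix}, \qquad \mP_C d_C = \begin{pmatrix} \mP d & \mP\psi + (-1)^{\ell-1}Kd \\ 0 & (-1)^{\ell-1}\mP d\end{pmatrix},
\]
the two diagonal blocks and the lower-left block agree immediately from the chain-map property $\del\mP = \mP d$ of \eqref{Pchain}. Hence \eqref{PCchain} is equivalent to the single upper-right identity $\del K - (-1)^{\ell-1}K d = \mP\psi - \cpsi\mP$, which is all that remains to establish. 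I would abbreviate the right-hand operator by $A := \mP\psi - \cpsi\mP \colon \Om^\bullet(M)\to C^{\bullet+\ell}(M,f)$.

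The key preliminary step is a commutation relation for $A$. Using $d(\psi\w\gamma) = (-1)^\ell\psi\w d\gamma$ (since $\psi$ is closed), the chain-map property, and the Leibniz rule $\del\cpsi = (-1)^\ell\cpsi\del$, I would compute $\del(\mP\psi) = (-1)^\ell(\mP\psi)\,d$ and $\del(\cpsi\mP) = (-1)^\ell(\cpsi\mP)\,d$, and subtract to obtain $\del A = (-1)^\ell A\,d$. I would also record, from \eqref{c()coh}, that $A$ sends $d$-closed forms into $\im\del$; in particular $A\,\mathcal{H}\xi$ is $\del$-exact for every $\xi$ (harmonic forms being closed), so that $\del_{\perp}^{-1}(A\mathcal{H}\xi)$ is defined and $\del\,\del_\perp^{-1}(A\mathcal{H}\xi) = A\mathcal{H}\xi$.

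With these in hand, I would verify the upper-right identity by inserting the Hodge decomposition $\xi = \mathcal{H}\xi + dd^*G\xi + d^*dG\xi$ into \eqref{Kdef} and applying $\del$ to $K\xi$. Applying $\del$ to the first term of $K$ and using $\del A = (-1)^\ell A d$ turns $(-1)^\ell A\,d^*G\xi$ into $A\,dd^*G\xi$, the exact component; applying $\del$ to the second term returns $A\,\mathcal{H}\xi$, the harmonic component. For the remaining contribution I would use that $d\xi$ is exact, so its harmonic part vanishes and $K(d\xi) = (-1)^\ell A\,d^*G\,d\xi$; since the Green's operator commutes with $d$, one has $d^*Gd\xi = d^*dG\xi$, the coexact component, giving $(-1)^{\ell-1}Kd\xi = -A\,d^*dG\xi$. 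Forming the combination $\del K\xi - (-1)^{\ell-1}Kd\xi$ then reassembles $A\bigl(\mathcal{H}\xi + dd^*G\xi + d^*dG\xi\bigr) = A\xi$, which is exactly the desired identity.

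The step I expect to require the most care is the treatment of the second term of $K$, namely the well-definedness and behavior of $\del_\perp^{-1}$: one must check that $A\,\mathcal{H}\xi$ genuinely lies in $\im\del$ (this is \eqref{c()coh} applied to the closed form $\mathcal{H}\xi$) and that the right inverse $\del_\perp^{-1}$ satisfies $\del\,\del_\perp^{-1} = \mathrm{id}$ on $\im\del$, as set up in the paragraph following \eqref{hodged}. The rest is bookkeeping of the signs in $\del A = (-1)^\ell A d$ together with the standard Hodge-theoretic facts that $G$ commutes with $d$ and $d^*$ and that $\mathcal{H}$ annihilates exact and coexact forms.
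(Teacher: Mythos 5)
Your proposal is correct and follows essentially the same route as the paper: reduce \eqref{PCchain} to the single off-diagonal identity $\mP\psi-\cpsi\mP=\del K+(-1)^{\ell}Kd$, then verify it by applying $\del$ and $Kd$ to the three pieces of the Hodge decomposition, using $\del\mP=\mP d$, $\del\cpsi=(-1)^{\ell}\cpsi\del$, $Gd=dG$, and $\mathcal{H}d\xi=0$. Your explicit packaging of the commutation relation as $\del A=(-1)^{\ell}A\,d$ and your remark on the well-definedness of $\del_{\perp}^{-1}(A\mathcal{H}\xi)$ via \eqref{c()coh} are just slightly more explicit versions of steps the paper carries out inline or in the paragraph preceding the theorem.
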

 \begin{proof}
The right and the left hand side of \eqref{PCchain} acting on $\eta\oplus \xi \in \Om^k(M)\oplus \Om^{k-\ell+1}(M)=\tC^k(\psi)$ give
 \begin{align*}
     \mP_{C}\,d_C&=\begin{pmatrix} \mP & K \\ 0 & \mP \end{pmatrix}\begin{pmatrix} d & \psi \\ 0 & (-1)^{\ell-1}d \end{pmatrix}=\begin{pmatrix} \mP d & \mP\psi+(-1)^{\ell-1}Kd \\ 0 & (-1)^{\ell-1}\mP d \end{pmatrix}, \\
     \del_C\,\mP_{C}&=\begin{pmatrix} \del & \cpsi \\ 0 & (-1)^{\ell-1}\del \end{pmatrix}\begin{pmatrix} \mP & K \\ 0 & \mP \end{pmatrix}=\begin{pmatrix} \del \mP & \del K +c(\psi)\mP\\ 0 & (-1)^{\ell-1}\del \mP \end{pmatrix}.
 \end{align*} 
 Since $\mP$ is a chain map \eqref{Pchain}, i.e. $d\,\mP=\mP\,\del$, the only entry we need to check comes from the off-diagonal one,  
 $$\mP\psi+(-1)^{\ell-1}Kd= \del K+c(\psi)\mP\,,$$ 
 or equivalently, we need to show that
 \begin{align}\label{Khom}
 \mP \psi - c(\psi)\mP= \del K+(-1)^{\ell}Kd\,, 
 \end{align}
 or $K$ is a graded chain homotopy. To compute $Kd\xi$, note first that 
 $\mathcal{H} d\xi=0\,$, $\forall \xi \in \Omega^{k-l+1}(M).$ Therefore, we find that
\[Kd\xi=(-1)^{\ell}\left(\mP\psi-\cpsi\mP\right)d^*Gd\xi=(-1)^{\ell}\left(\mP\psi -\cpsi\mP\right)d^*dG\xi\,,\]
having used the fact that $Gd=dG$.  Now, for the $\del K\xi$ term, we have 
\begin{align*}
    \del K\xi
    &=(-1)^\ell\del\left[\left(\mP \psi-\cpsi\mP\right) d^*G\xi \right]+\del\left[\del_{k, \perp}^{-1}\left(\left(\mP\psi-\cpsi\mP\right) \mathcal{H} \xi\, \right)\right] \\
    &=(-1)^{\ell}\left[(-1)^\ell\left(\mP \psi-\cpsi\mP\right) dd^*G\xi \right]+ \left(\mP \psi-\cpsi\mP\right) \mathcal{H}\xi \\
    &=\left(\mP\psi-\cpsi\mP\right) \left(dd^*G\xi+  \mathcal{H}\xi\right)
\end{align*} 
where in the second line, we have applied the graded commutative properties: $\del \mP=\mP d$ and $\del \cpsi = (-1)^\ell \cpsi \del$ for $\psi$ a $d$-closed $\ell$-form.  Altogether, we find for the right-hand side of \eqref{Khom}
\begin{align*}
\del K\xi+(-1)^{\ell}Kd\xi
&=\left(\mP \psi - \cpsi\mP\right) \left(dd^*G\xi+d^*dG\xi+\mathcal{H}\xi\right) \\
&=\left(\mP\psi -\cpsi\mP\right)\xi
\end{align*}
having used the Hodge decomposition formula of \eqref{hodged}.  
Thus, we have proved that $K$ is a graded chain homotopy, and therefore, $\mP_{C}\,d_C=\del_C\, \mP_{C}$. 
\end{proof}

\subsection{Isomorphism of cohomologies via Five Lemma}
A mapping cone cochain complex can be described by a short exact sequence of chain maps.  For the differential forms case, we have
\begin{equation}\label{Cses}
\begin{tikzcd}
0 \arrow[r]  &
(\Omega^k(M),d) \arrow[r,"\iota_{dR}"]  &
(\tC^k(\psi), d_C) \arrow[r,"\pi_{dR}"]  & (\Omega^{k-\ell+1}(M), (-1)^{\ell-1} d) \arrow[r] & 0
\end{tikzcd}
\end{equation}
where $\iota_{dR}$ is the inclusion into the first component $\iota_{dR}(\eta)=\begin{pmatrix} \eta \\ 0 \end{pmatrix}$ and $\pi_{dR}$ is the projection of the second component $\pi_{dR}\begin{pmatrix} \eta \\ \xi \end{pmatrix}=\xi$.  It is easy to check that these maps are chain maps:
\begin{align*}
\iota_{dR} d\eta =\begin{pmatrix} d\eta \\ 0 \end{pmatrix}=d_C\iota_{dR}\eta
\end{align*} 
and
\begin{align*}
\pi_{dR}\,d_C\begin{pmatrix} \eta \\ \xi \end{pmatrix} =\pi_{dR}\begin{pmatrix}d\eta + \psi \wedge \xi \\ (-1)^{\ell-1}d\xi \end{pmatrix}=(-1)^{\ell-1}d\xi=(-1)^{\ell-1}d\left\{\pi_{dR}\begin{pmatrix} \eta \\ \xi \end{pmatrix}\right\}\,.
\end{align*}
The short exact sequence \eqref{Cses} implies the following long exact sequence for the cohomology of Cone$(\psi)$ 
\begin{equation}\label{CHes}
\begin{tikzcd} 
\ldots\arrow[r]&H^{k-\ell}_{dR}(M) \arrow[r, "\text{[}\psi\text{]}"] & H^{k}_{dR}(M) \arrow[r, "\text{[}\iota_{dR}\text{]}"] & H^{k}(\tC(\psi)) \arrow[r, "\text{[}\pi_{dR}\text{]}"] & H^{k-\ell+1}_{dR}(M)
\arrow[r]&\dots
\end{tikzcd}
\end{equation}
Analogously, for $\tC(\cpsi)$, we also have the short exact sequence of chain maps
\begin{equation}\label{CMses}
\begin{tikzcd}
0  \arrow[r] & (C^k(M,f), \del) \arrow[r, "\iota_{C(f)}"] & (\tC^k(\cpsi), \del_C) \arrow[r, "\pi_{C(f)}"] & (C^{k-\ell+1}(M,f), (-1)^{\ell-1}\del) \arrow[r]& 0 
\end{tikzcd}
\end{equation}
and the long exact sequence of cohomology
\begin{equation}\label{CMHes}
\begin{tikzcd} 
\mspace{-5mu}\ldots\mspace{-3mu} \arrow[r]&\mspace{-3mu} H^{k-\ell}_{C(f)}(M) \mspace{-1mu}\arrow[r, "\text{[}\cpsi\text{]}"] & H^{k}_{C(f)}(M) \arrow[r, "\text{[}\iota_{C(f)}\text{]}"] & H^{k}(\tC(\cpsi)) \arrow[r, "\text{[}\pi_{C(f)}\text{]}"] & H^{k-\ell+1}_{C(f)}(M) \mspace{-3mu}
\arrow[r]& \mspace{-3mu}\dots
\end{tikzcd}
\end{equation}

The two short exact sequences, \eqref{Cses} and \eqref{CMses}, fit into a commutative diagram. 
\begin{equation}\label{Dses}
\begin{tikzcd}
0 \arrow[r]  &
(\Omega^k(M),d) \arrow[r,"\iota_{dR}"] \arrow[d,swap,"\mP"] &
(\tC^k(\psi)), d_C) \arrow[r,"\pi_{dR}"] \arrow[d,swap,"\mP_{C}"] & (\Omega^{k-\ell+1}(M), (-1)^{\ell-1}d) \arrow[d,swap,"\mP"] \arrow[r] & 0
\\
0 \arrow[r] & (C^k(M,f), \del) \arrow[r, "\iota_{C(f)}"] & (\tC^k(\cpsi), \del_C) \arrow[r, "\pi_{C(f)}"] & (C^{k-\ell+1}(M,f), (-1)^{\ell-1}\del) \arrow[r]& 0 
\end{tikzcd}
\end{equation}
The commutativity of the above diagram can be checked as follows: 
\[\iota_{C(f)}(\mP(\eta))=\begin{pmatrix} \mP\eta \\ 0 \end{pmatrix}=\begin{pmatrix} \mP & K \\ 0 & \mP \end{pmatrix}\begin{pmatrix} \eta \\ 0 \end{pmatrix}=\mP_{C}(\iota_{dR}(\eta))\,,\]
\[\pi_{C(f)}\left(\mP_C\begin{pmatrix} \eta \\ \xi \end{pmatrix}\right)=\pi_{C(f)}\begin{pmatrix} \mP\eta+K\xi \\ \mP\xi \end{pmatrix}=\mP\xi=\mP\left(\pi_{dR}\begin{pmatrix} \eta \\ \xi \end{pmatrix}\right).\] 
The short exact commutative diagram \eqref{Dses} gives a long commutative diagram of cohomologies:
\begin{equation}\label{DHes}
\begin{tikzcd} 
H^{k-\ell}_{dR}(M) \arrow[r, "\text{[}\psi\text{]}"] \arrow[d,swap,"\text{[}\mP\text{]}"] & H^{k}_{dR}(M) \arrow[d,swap,"\text{[}\mP\text{]}"] \arrow[r, "\text{[}\iota_{dR}\text{]}"] & H^{k}(\tC(\psi)) \arrow[d,swap,"\text{[}\mP_{C}\text{]}"] \arrow[r, "\text{[}\pi_{dR}\text{]}"] & H^{k-\ell+1}_{dR}(M) \arrow[d,swap,"\text{[}\mP\text{]}"] \arrow[r, "\text{[}\psi\text{]}"] & H^{k+1}_{dR}(M) \arrow[d,swap,"\text{[}\mP\text{]}"] \\ 
H^{k-\ell}_{C(f)}(M) \arrow[r, "\text{[}\cpsi\text{]}"] & H^{k}_{C(f)}(M) \arrow[r, "\text{[}\iota_{C(f)}\text{]}"] & H^{k}(\tC(\cpsi)) \arrow[r, "\text{[}\pi_{C(f)}\text{]}"] & H^{k-\ell+1}_{C(f)}(M) \arrow[r, "\text{[}\cpsi\text{]}"] & H^{k+1}_{C(f)}(M)
\end{tikzcd}
\end{equation}
We can check that each square commutes.  The outer squares commute since  $\mP (\psi \wedge \xi)$ and $c(\psi)\mP\xi$ are cohomologous when both $\xi$ and $\psi$ are $d$-closed, by \eqref{c()coh}, as was shown by Austin-Braam in \cite{AB}*{Section  3.5}. 
The middle two squares commute following from the commutativity of the chain maps in \eqref{Dses}.  Furthermore, the vertical map $[\mP]$ is an isomorphism \eqref{Pisom} as shown by Bismut-Zhang and Laudenbach \cite{BZ}*{Theorem 2.9} (see also  \cite{Zhang}*{Theorem 6.4}). 

We can now apply the Five Lemma to \eqref{DHes} which implies that the middle vertical map $[\mP_{C}]$ is also an isomorphism on cohomology, and thus we have  proved Theorem \ref{MIso}.
\begin{thm}
 $\mP_{C}:(\tC^\bullet(\psi), d_C)\to (\tC^\bullet(\cpsi), \del_C)$ is a $\mathbb{Z}$ graded quasi-isomorphism.
\end{thm}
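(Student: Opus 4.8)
The plan is to prove this not by exhibiting an inverse chain map, but by a diagram chase with the Five Lemma, exploiting the two facts already in hand: that $\mP_{C}$ is a chain map \eqref{PCchain}, and that the de Rham--to--Morse map $\mP$ is a quasi-isomorphism \eqref{Pisom}. First I would record the two short exact sequences of complexes \eqref{Cses} and \eqref{CMses}: the former realizes $\tC(\psi)$ as an extension of $(\Om^{\bullet-\ell+1}(M),(-1)^{\ell-1}d)$ by $(\Om^\bullet(M),d)$ through the inclusion $\iota_{dR}$ and projection $\pi_{dR}$, and the latter does the same for $\tC(\cpsi)$ through $\iota_{C(f)}$ and $\pi_{C(f)}$. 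These are exact because $\tC^k=\Om^k\oplus\Om^{k-\ell+1}$ (respectively $C^k\oplus C^{k-\ell+1}$) splits degreewise; that $\iota$ and $\pi$ are chain maps is a one-line matrix check against the upper-triangular $d_C$ and $\del_C$.

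Next I would assemble the ladder \eqref{Dses} whose rows are these two short exact sequences and whose vertical arrows are $\mP$, $\mP_{C}$, $\mP$. Commutativity of its two squares reduces to $\iota_{C(f)}\mP=\mP_{C}\iota_{dR}$ and $\pi_{C(f)}\mP_{C}=\mP\pi_{dR}$, both immediate from the upper-triangular shape $\mP_{C}=\left(\begin{smallmatrix}\mP & K\\ 0 & \mP\end{smallmatrix}\right)$. Applying the long-exact-sequence functor to each row yields the ladder \eqref{DHes}, and naturality of the connecting homomorphism makes the squares adjacent to it commute automatically. The one bookkeeping point I would verify is the identification of the connecting maps: lifting a closed class $[\xi]$ in the third term to $(0,\xi)$ and applying $d_C$ gives $(\psi\w\xi,0)$, so the top connecting homomorphism is cup product $[\psi]:H^{k-\ell}_{dR}\to H^{k}_{dR}$; the identical computation with $\del_C$ shows the bottom one is $[\cpsi]:H^{k-\ell}_{C(f)}\to H^{k}_{C(f)}$, exactly as labelled in \eqref{CHes} and \eqref{CMHes}.

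The step doing the real work is showing that the outer squares of \eqref{DHes}, which link $[\psi]$ to $[\cpsi]$ across the vertical maps $[\mP]$, commute. This is not formal: it demands that $\mP$ intertwine $\psi\w(-)$ with $\cpsi(-)$ on cohomology, that is $[\mP][\psi]=[\cpsi][\mP]$, which is precisely the Austin--Braam relation \eqref{c()coh} that $\mP(\psi\w\gamma)$ and $\cpsi\mP\gamma$ are cohomologous for closed $\gamma$. I expect this to be the main obstacle, since everything else is diagram bookkeeping whereas this square encodes the genuine geometric content that integrating $\psi$ over flow-line moduli spaces computes the same cohomological operation as wedging with $\psi$.

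With every square of \eqref{DHes} commuting and the four flanking vertical arrows $[\mP]$ isomorphisms by \eqref{Pisom}, the Five Lemma forces the middle arrow $[\mP_{C}]$ to be an isomorphism in each degree $k$. Because this holds for every $k\in\mathbb{Z}$ and $\mP_{C}$ is a grading-preserving chain map, $\mP_{C}$ is a $\mathbb{Z}$-graded quasi-isomorphism, and hence $H^k(\tC(\psi))\cong H^k(\tC(\cpsi))$ for all $k$, which is the assertion of Theorem \ref{MIso}.
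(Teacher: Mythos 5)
Your proposal follows essentially the same route as the paper: both build the commutative ladder of short exact sequences \eqref{Dses}, pass to the long exact sequences \eqref{DHes}, verify that the inner squares commute by the chain-map property of $\mP_{C}$ and the outer squares by the Austin--Braam relation \eqref{c()coh}, and then invoke the Bismut--Zhang--Laudenbach isomorphism \eqref{Pisom} together with the Five Lemma. Your identification of the connecting homomorphisms with $[\psi]$ and $[\cpsi]$, and your flagging of the outer squares as the step carrying the geometric content, match the paper's argument exactly.
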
 



\section{Cone Morse inequalities}

Having established the quasi-isomorphism between the complexes, Cone$(\psi)$ and Cone$(c(\psi))$, we will proceed now to prove Theorem \ref{CMineq}, which gives the Morse-type inequalities for the Cone$(\psi)$ complex analogous to those \eqref{sm2}-\eqref{wm1} for the de Rham complex.  

For a closed, oriented manifold $M$ and a $d$-closed form $\psi\in \Omega^\ell(M)$, let us denote by $b_k^\psi = \dim H^k(\text{Cone}(\psi))$.  From \eqref{hckk}, we know that 
\begin{align}\label{ConeH}
H^{k}(\tC(\psi))\cong \coker\left([\psi]\!: H_{dR}^{k-\ell}\to H_{dR}^k\right)\,\oplus\,\, \ker\left([\psi]\!: H_{dR}^{k-\ell+1}\to H_{dR}^{k+1}\right)
\end{align}
which implies
\begin{align}\label{bpsik}
    b^\psi_k &=\dim \left[\coker \left([\psi]:H_{dR}^{k-\ell} \to H_{dR}^{k}\right)\right]+\dim \left[\ker \left([\psi]:H_{dR}^{k-\ell+1} \to H_{dR}^{k+1}\right)\right]\nonumber\\
    &=b_k-\txw_{k-\ell}+b_{k-\ell+1}-\txw_{k-\ell+1}  
\end{align}
where $b_k=\dim H_{dR}^k(M)$ and 
\begin{align}\label{rkdef}
\txw_k=\rk\left( [\psi]:H_{dR}^{k}(M) \to H_{dR}^{k+\ell}(M)\right)\,.
\end{align}

We would like to bound $b^\psi_k$ by means of the Morse function and properties of the cone Morse complex $\tC(\cpsi)$.  That $H(\tC(\psi))$ as expressed above is related to the cokernel and kernel of the $\psi$ map is suggestive that we should look for an analogous relationship between $H(\tC(\cpsi))$ with the cokernel and kernel of the $\cpsi$ map.  Indeed, such a relationship exists for any cone complex.  (See  \cite{Weibel} or Appendix \ref{AppB} for a review.)  For the Morse complex $(C^\bullet(M, f), \partial)$, we will make use of two subcomplexes, the kernel and cokernel complex, associated to the map $\cpsi$:  
\begin{itemize}
\item The kernel complex of $\cpsi$, $(\ker \cpsi, \del)$, is the complex consisting of $\ker^j\! \cpsi =\{b \in C^j(M,f) \,|\,\cpsi b =0\}$, with differential $\del$.
\item The cokernel complex of $\cpsi$, $(\coker \cpsi, \del^\pi)$, is the complex $\coker^j\!\cpsi=\{[a] \in C^j/\im \cpsi \}$ with differential $\del^\pi [a]=[\del a] \in C/\im \cpsi$.
\end{itemize}
The cohomologies of these two subcomplexes together with $H(\tC(\cpsi))$ forms a long exact sequence \eqref{leskca}
\begin{align}\label{leskc}
\dots \xrightarrow{~~~} H^{k-\ell+1}(\ker\cpsi) \xrightarrow[]{h^{k-\ell+1}_{ker}} H^{k}(\text{Cone}(\cpsi))\xrightarrow[]{h^k_{Cone}} H^{k}(\coker\cpsi) \xrightarrow[]{h^k_{coker}} \dots 
\end{align} 
The precise definitions of the maps in the long exact sequence will not be needed in our discussion here.  From \eqref{leskc}, we can immediately obtain the following weak cone Morse inequality.
\begin{thm}[Weak Cone Morse Inequalities]
On a closed manifold M with $\psi\in \Om^\ell(M)$ a $d$-closed form, let $b^\psi_k = \dim H^k(\tC(\psi))$ and $m_k$ the number of index $k$ critical points of a Morse function on $M$.  Then, we have for  $k = 0, 1, \ldots, (\dim M + \ell -1)$,
\begin{align}\label{wcMIneq}
b^\psi_k \leq m_k - v_{k-\ell}+m_{k-\ell+1}-v_{k-\ell+1}\,, 
\end{align} 
where 
\begin{align}\label{vkdef}
v_k=\rk\left(\cpsi:C^{k}(M,f) \to C^{k+\ell}(M,f)\right)\,.
\end{align}
\end{thm}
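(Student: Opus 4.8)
The plan is to derive \eqref{wcMIneq} directly from the long exact sequence \eqref{leskc}, which relates the cone Morse cohomology to the cohomologies of the kernel and cokernel complexes of $\cpsi$, together with Theorem \ref{MIso}, which identifies $b^\psi_k = \dim H^k(\tC(\psi))$ with $\dim H^k(\tC(\cpsi))$. The entire argument is a dimension count in the category of finite-dimensional $\mathbb{R}$-vector spaces, so no analytic input beyond the already-established machinery is required.

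First I would exploit exactness of \eqref{leskc} at the middle term $H^k(\tC(\cpsi))$. For any three consecutive arrows $A \xrightarrow{f} B \xrightarrow{g} C$ of an exact sequence one has $\dim B = \dim\im f + \dim\im g \leq \dim A + \dim C$. Taking $A = H^{k-\ell+1}(\ker\cpsi)$, $B = H^k(\tC(\cpsi))$, and $C = H^k(\coker\cpsi)$ gives
\begin{align*}
b^\psi_k \;=\; \dim H^k(\tC(\cpsi)) \;\leq\; \dim H^{k-\ell+1}(\ker\cpsi) + \dim H^k(\coker\cpsi)\,.
\end{align*}
Next I would bound each summand by the dimension of its underlying cochain group, using that the cohomology of any complex is a subquotient of its cochains and hence has dimension at most that of the cochain space. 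For the kernel complex, rank-nullity applied to $\cpsi\colon C^{k-\ell+1}(M,f)\to C^{k+1}(M,f)$ gives $\dim\ker^{k-\ell+1}\cpsi = m_{k-\ell+1}-v_{k-\ell+1}$, whence $\dim H^{k-\ell+1}(\ker\cpsi)\leq m_{k-\ell+1}-v_{k-\ell+1}$. For the cokernel complex, the image of $\cpsi\colon C^{k-\ell}(M,f)\to C^k(M,f)$ has dimension $v_{k-\ell}$, so $\dim\coker^k\cpsi = m_k - v_{k-\ell}$ and therefore $\dim H^k(\coker\cpsi)\leq m_k - v_{k-\ell}$. Adding the two bounds reproduces exactly the right-hand side of \eqref{wcMIneq}.

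Since the sequence \eqref{leskc} is a standard feature of any mapping cone (reviewed in Appendix \ref{AppB}), the proof is purely formal and presents no genuine obstacle. The one point demanding care is the index bookkeeping dictated by the degree shift $\ell-1$ in the cone construction: the image of $\cpsi$ landing in $C^k$ is sourced from $C^{k-\ell}$, producing the term $v_{k-\ell}$ in the cokernel bound, whereas the kernel contribution is controlled by $\cpsi$ acting \emph{out of} $C^{k-\ell+1}$, producing $v_{k-\ell+1}$; matching these shifts against the grading of $b^\psi_k$ is what yields the precise form of the inequality. Checking that $k=0,\ldots,\dim M + \ell - 1$ is the correct range, namely the set of degrees on which at least one of the two summands can be nonzero, is then routine.
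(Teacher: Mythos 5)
Your proposal is correct and follows essentially the same route as the paper: the paper's proof likewise reads off $b^\psi_k \leq \dim H^{k}(\coker\cpsi) + \dim H^{k-\ell+1}(\ker\cpsi)$ from exactness of \eqref{leskc} at the middle term, bounds each cohomology by its cochain group, and computes those dimensions as $m_k - v_{k-\ell}$ and $m_{k-\ell+1}-v_{k-\ell+1}$ via rank-nullity. Your index bookkeeping and the implicit use of Theorem \ref{MIso} to identify $b^\psi_k$ with $\dim H^k(\tC(\cpsi))$ both match the paper exactly.
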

\begin{proof}
From \eqref{leskc}, we have
\begin{align*}
b_k^\psi 
& \leq   \dim H^{k}(\coker\cpsi) + \dim H^{k-\ell+1}(\ker\cpsi)\\
& \leq   \dim (\coker^k\!\cpsi)  + \dim (\ker^{k-\ell+1}\!\cpsi) \\
& =  m_k - v_{k-\ell} + m_{k-\ell+1}-v_{k-\ell+1} \,.
\end{align*}
\end{proof}
In general, the number $v_k=\rk\cpsi |_{C^k(M,f)}$ is not equal to $r_k=\rk \, [\psi]|_{H_{dR}^k(M)}$ as defined in \eqref{rkdef}.  However, we have the following relations.
\begin{lem}\label{rvrel}
 Let $\txw_k=\rk\, [\psi]|_{H_{dR}^k(M)}$ and $v_k =\rk \cpsi |_{C^k(M,f)}$ as defined in \eqref{rkdef} and \eqref{vkdef}, respectively. 
Then for $k = 0, 1, \ldots, (\dim M + \ell -1)$,
\begin{itemize}
\item[(a)] $r_k=\rk \left([\cpsi]: H^k_{C(f)}(M) \to H^{k+\ell}_{C(f)}(M)\right)$;
\item[(b)] $\txw_k \leq v_k$; 
\item[(c)] $b_k-v_{k-\ell}+b_{k-\ell+1}-v_{k-\ell+1} \leq b_k^{\psi} \leq m_k-\txw_{k-\ell}+m_{k-\ell+1}-\txw_{k-\ell+1}$;
\item[(d)] $(v_{k-\ell}-\txw_{k-\ell})+(v_{k-\ell+1}-\txw_{k-\ell+1})\leq (m_k-b_k)+(m_{k-\ell+1}-b_{k-\ell+1})\,$.
\end{itemize}
\end{lem}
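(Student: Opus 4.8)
The plan is to handle the four parts in the order (a), (b), (c), (d), since (c) and (d) will turn out to be purely formal consequences of (a) and (b) combined with the identity \eqref{bpsik} for $b^\psi_k$, the ordinary weak Morse inequalities, and the weak cone Morse inequality \eqref{wcMIneq}. Essentially all of the content sits in (a) and (b).

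For (a), I would exploit the commuting square already assembled in the proof of Theorem \ref{MIso}: by \eqref{Pisom} the map $[\mP]$ is an isomorphism $H^\bullet_{dR}(M)\xrightarrow{\sim}H^\bullet_{C(f)}(M)$, and by \eqref{c()coh} we have $[\mP][\psi]=[\cpsi][\mP]$. Reading this in degrees $k$ and $k+\ell$, the induced map $[\cpsi]\colon H^k_{C(f)}\to H^{k+\ell}_{C(f)}$ is conjugate, via the isomorphisms $[\mP]$, to the cup product $[\psi]\colon H^k_{dR}\to H^{k+\ell}_{dR}$. Conjugate linear maps have equal rank, which gives $\txw_k=\rk\!\left([\cpsi]\colon H^k_{C(f)}\to H^{k+\ell}_{C(f)}\right)$, the assertion of (a).

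For (b), the idea is that $\cpsi$ descends to cohomology and that passing to cohomology cannot increase rank. The anticommutation $\del\cpsi=(-1)^\ell\cpsi\del$ recorded in the preliminaries shows that $\cpsi$ sends cocycles to cocycles and coboundaries to coboundaries, hence induces $[\cpsi]$ on $H_{C(f)}$. Writing $Z^k=\ker\del^k$, the image of $[\cpsi]$ in $H^{k+\ell}_{C(f)}$ is a quotient of $\cpsi(Z^k)$, so $\txw_k=\rk[\cpsi]\le\dim\cpsi(Z^k)\le\dim\cpsi(C^k)=v_k$. This subquotient estimate is the one genuinely non-bookkeeping step, and I regard it as the crux of the lemma; it is standard, but it is the only place where the relation between the chain-level rank $v_k$ and the cohomological rank $\txw_k$ is actually produced, so care with the descent of $\cpsi$ is where the real (if modest) work lies.

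For (c) I would substitute \eqref{bpsik}, namely $b^\psi_k=b_k-\txw_{k-\ell}+b_{k-\ell+1}-\txw_{k-\ell+1}$, into each desired inequality: the lower bound collapses to $\txw_{k-\ell}+\txw_{k-\ell+1}\le v_{k-\ell}+v_{k-\ell+1}$, immediate from (b), while the upper bound collapses to $b_k+b_{k-\ell+1}\le m_k+m_{k-\ell+1}$, immediate from the weak Morse inequalities $b_j\le m_j$. Finally, (d) follows by inserting \eqref{bpsik} into the weak cone Morse inequality \eqref{wcMIneq} and rearranging, which yields exactly $(v_{k-\ell}-\txw_{k-\ell})+(v_{k-\ell+1}-\txw_{k-\ell+1})\le(m_k-b_k)+(m_{k-\ell+1}-b_{k-\ell+1})$. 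Thus no new ideas are needed beyond (a), (b), and the two inequality inputs, and the index range $k=0,\dots,\dim M+\ell-1$ is inherited from \eqref{wcMIneq}.
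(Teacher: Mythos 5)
Your proposal is correct and follows essentially the same route as the paper: (a) via conjugation by the isomorphism $[\mP]$ using \eqref{c()coh}, (b) by comparing the cohomological rank of $[\cpsi]$ to the chain-level rank $v_k$ (your subquotient estimate is equivalent to the paper's lifting of a basis of $\im[\cpsi]$ to a linearly independent set in $\im\cpsi$), and (c), (d) by formal bookkeeping with \eqref{bpsik} and \eqref{wcMIneq}. The only cosmetic difference is that for the upper bound in (c) you invoke the classical weak Morse inequality $b_j\le m_j$ where the paper applies (b) to \eqref{wcMIneq}; both are one-line and valid.
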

\begin{proof}
Property (a) follows from \eqref{Pisom}-\eqref{c()coh} which implies that $\text{rank } [\psi]= \text{rank } [\cpsi]$.  For (b), if $\{[\cpsi a_1], ... [\cpsi a_{r_k}]\}$ gives a basis for $\im\, [\cpsi]\subset H^{k+\ell}_{C(f)}(M)$, then $\{\cpsi a_1, ... , \cpsi a_{r_k}\}$ must constitute a linearly independent set of elements in $\im \cpsi\subset C^{k+\ell}(M, f)$, and therefore,
\[r_k \leq \dim \left(\im \cpsi \cap C^{k+\ell}(M, f)\right) = v_k\,.\]
Applying property (b) to \eqref{bpsik} and \eqref{wcMIneq} results in property (c). 
Lastly, property (d) follows from combining \eqref{bpsik} and \eqref{wcMIneq}.    
\end{proof}
From the standard Morse inequality, $b_k \leq m_k$, and Lemma \ref{rvrel}(b), $r_k \leq v_k$,  we see that the relation in Lemma \ref{rvrel}(d)
\begin{align}\label{posrel}
(v_{k-\ell}-\txw_{k-\ell})+(v_{k-\ell+1}-\txw_{k-\ell+1})\leq (m_k-b_k)+(m_{k-\ell+1}-b_{k-\ell+1})
\end{align}
consist of sums of two non-negative terms on both sides.  In particular, if the Morse function $f$ is perfect, i.e. $m_k=b_k$, 
then \eqref{posrel}  implies the following result.
\begin{cor}\label{wperf}
 If $f$ is a perfect Morse function, then $v_k = r_k$ and 
\begin{align}
b^\psi_k&= (m_{k}-v_{k-\ell}+m_{k-\ell+1}-v_{k-\ell+1})  
\end{align}
for $k=0, 1, \ldots, \dim M + \ell -1$.
\end{cor}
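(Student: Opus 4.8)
The plan is to exploit the sign structure already isolated in \eqref{posrel}: there both sides are sums of two \emph{non-negative} terms, so the perfectness hypothesis should collapse the right-hand side to zero and thereby force each nonnegative summand on the left to vanish, yielding the sharp identity $v_k = \txw_k$.

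First I would feed the hypothesis that $f$ is perfect, $m_k = b_k$ for all $k$, into \eqref{posrel}. This makes the right-hand side vanish identically, $(m_k - b_k) + (m_{k-\ell+1} - b_{k-\ell+1}) = 0$, so that \eqref{posrel} reduces to $(v_{k-\ell} - \txw_{k-\ell}) + (v_{k-\ell+1} - \txw_{k-\ell+1}) \leq 0$ for every $k$. By Lemma \ref{rvrel}(b) each summand $v_j - \txw_j$ is non-negative, and a sum of two non-negative numbers that is bounded above by zero forces both to be zero.

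Next, to upgrade this to $v_j = \txw_j$ for \emph{every} index $j$ (not merely for the special paired combinations appearing in \eqref{posrel}), I would observe that for any fixed $j$ the choice $k = j + \ell$ places $v_{k-\ell} - \txw_{k-\ell} = v_j - \txw_j$ as one of the two non-negative summands in the collapsed inequality; since the total is $\leq 0$ and the term is $\geq 0$, it must vanish. Hence $v_j = \txw_j$ for all $j$. Finally, I would substitute $v = \txw$ together with $m = b$ into the exact expression \eqref{bpsik}, namely $b^\psi_k = b_k - \txw_{k-\ell} + b_{k-\ell+1} - \txw_{k-\ell+1}$, which immediately rewrites as $b^\psi_k = m_k - v_{k-\ell} + m_{k-\ell+1} - v_{k-\ell+1}$, the claimed equality.

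The main obstacle, really the only delicate point, is the indexing/range bookkeeping in the forcing step: I must check that as $k$ runs over $0, \dots, \dim M + \ell - 1$ the family \eqref{posrel} genuinely exhibits every relevant $v_j - \txw_j$ as an \emph{isolated} non-negative summand, including the boundary indices where one of $m_j, b_j$ (or the partner term) is zero. The non-negativity of each individual difference, guaranteed by Lemma \ref{rvrel}(b) and the standard Morse inequality $b_k \leq m_k$, is precisely what rules out cancellation and makes this elementary argument airtight.
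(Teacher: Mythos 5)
Your proposal is correct and follows essentially the same route as the paper: the authors also observe that with $m_k=b_k$ the right-hand side of \eqref{posrel} vanishes, that both summands on the left are non-negative by Lemma \ref{rvrel}(b), hence each must be zero, and then substitute $v=\txw$, $m=b$ into \eqref{bpsik}. Your extra care about the index bookkeeping (choosing $k=j+\ell$ to isolate each $v_j-\txw_j$) is a detail the paper leaves implicit but does not change the argument.
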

Hence, for a perfect Morse function, the weak cone Morse inequality becomes an equality.  And this is as expected since a perfect Morse function implies for the Morse complex that $\dim H_{C(f)}^k(M) =\dim C^k(M,f)$, and therefore, $[\cpsi]|_{H_{C(f)}}$ and $\cpsi|_{C^k(M,f)}$ are the same map.  
Clearly, Equation  \eqref{posrel} constrains the deviations of the $v_k$'s from the $r_k$'s by the deviations of the $m_k$'s from the $b_k$'s.  

We now proceed to prove the strong cone Morse inequalities. 
\begin{thm}[Strong Cone Morse Inequalities]\label{SMTI}
On a closed manifold M with $\psi\in \Om^\ell(M)$ a $d$-closed form, let $b^\psi_k = \dim H^k(\tC(\psi))$, $m_k$ be the number of index $k$ critical points of a Morse function on $M$, and $v_k =\rk \cpsi |_{C^k(M,f)}$.  Then, we have for  $j = 0, 1, \ldots, (\dim M + \ell -1)$,
\begin{align}\label{scMIneq}
\sum_{k=0}^j(-1)^{j-k} b_k^\psi \leq \sum_{k=0}^j(-1)^{j-k}(m_k-v_{k-\ell}+m_{k-\ell+1}-v_{k-\ell+1}).
\end{align}
\end{thm}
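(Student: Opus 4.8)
The plan is to bound the alternating sum of the cone Betti numbers $b^\psi_k$ by routing through the kernel and cokernel complexes of $\cpsi$ and then reassembling via the long exact sequence \eqref{leskc}. Two inputs suffice: (i) the elementary strong Morse (subadditivity) estimate, valid for any finite cochain complex of real vector spaces, applied separately to $(\ker\cpsi,\del)$ and $(\coker\cpsi,\del^\pi)$; and (ii) the long exact sequence \eqref{leskc} tying their cohomologies to $H^\bullet(\tC(\cpsi))$. Throughout I use $b^\psi_k\cong\dim H^k(\tC(\cpsi))$ from Theorem \ref{MIso}.

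First I would record the dimensions. Rank–nullity for $\cpsi:C^k(M,f)\to C^{k+\ell}(M,f)$ gives $\dim\ker^k\!\cpsi=m_k-v_k$, while $\dim\coker^k\!\cpsi=m_k-v_{k-\ell}$ since $\dim\im(\cpsi:C^{k-\ell}\to C^k)=v_{k-\ell}$, with $v_k$ as in \eqref{vkdef}. Writing $\kappa_k=\dim H^k(\ker\cpsi)$ and $\sigma_k=\dim H^k(\coker\cpsi)$, the standard strong Morse estimate applied to the cokernel complex at cutoff $j$ yields
\[\sum_{k=0}^{j}(-1)^{j-k}\sigma_k\le\sum_{k=0}^{j}(-1)^{j-k}(m_k-v_{k-\ell}),\]
and applied to the kernel complex at cutoff $j-\ell+1$, then reindexed by $i=k-\ell+1$ (all terms vanishing for $k<\ell-1$), yields
\[\sum_{k=0}^{j}(-1)^{j-k}\kappa_{k-\ell+1}\le\sum_{k=0}^{j}(-1)^{j-k}(m_{k-\ell+1}-v_{k-\ell+1}).\]
Adding these produces exactly the right-hand side of \eqref{scMIneq}:
\[\sum_{k=0}^{j}(-1)^{j-k}(\sigma_k+\kappa_{k-\ell+1})\le\sum_{k=0}^{j}(-1)^{j-k}(m_k-v_{k-\ell}+m_{k-\ell+1}-v_{k-\ell+1}).\]

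It remains to prove $\sum_{k=0}^{j}(-1)^{j-k}b^\psi_k\le\sum_{k=0}^{j}(-1)^{j-k}(\sigma_k+\kappa_{k-\ell+1})$. For this I would align the three strands of \eqref{leskc} by degree, placing $H^{k-\ell+1}(\ker\cpsi)$, $H^k(\tC(\cpsi))$, and $H^k(\coker\cpsi)$ all in ``degree $k$''; in this aligned grading the sequence is an ordinary long exact sequence whose connecting map $h^k_{coker}\colon H^k(\coker\cpsi)\to H^{k-\ell+2}(\ker\cpsi)$ has degree $+1$. Exactness together with rank–nullity at the three consecutive spots then gives the pointwise identity
\[b^\psi_k=\kappa_{k-\ell+1}+\sigma_k-\rho_{k-1}-\rho_k,\qquad \rho_k:=\rk\,h^k_{coker}\ge 0,\ \ \rho_{-1}=0.\]
Taking the alternating sum and telescoping collapses $\sum_{k=0}^{j}(-1)^{j-k}(\rho_{k-1}+\rho_k)$ to $\rho_j\ge0$, so
\[\sum_{k=0}^{j}(-1)^{j-k}b^\psi_k=\sum_{k=0}^{j}(-1)^{j-k}(\kappa_{k-\ell+1}+\sigma_k)-\rho_j\le\sum_{k=0}^{j}(-1)^{j-k}(\kappa_{k-\ell+1}+\sigma_k).\]
Chaining this with the previous display establishes \eqref{scMIneq} for $j=0,\dots,\dim M+\ell-1$.

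The dimension counts and the telescoping are routine. The step requiring care is the long-exact-sequence bookkeeping: aligning the shifts by $\ell-1$ and $\ell-2$ among the three families, verifying the connecting map is degree $+1$ after alignment, and checking that the boundary contributions emerge with the correct sign so that the defect $\rho_j$ is \emph{subtracted}, together with the vanishing of negative-degree terms and of $\rho_{-1}$ at the endpoints. I expect this degree-shift accounting, rather than any conceptual difficulty, to be the main obstacle.
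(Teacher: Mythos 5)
Your proposal is correct and follows essentially the same route as the paper: both pass to the kernel and cokernel complexes of $c(\psi)$, extract the bound $\sum_{k=0}^j(-1)^{j-k}b^\psi_k\le\sum_{k=0}^j(-1)^{j-k}\bigl(\dim H^{k}(\coker c(\psi))+\dim H^{k-\ell+1}(\ker c(\psi))\bigr)$ from the long exact sequence \eqref{leskc} (your telescoped defect $\rho_j=\rk\, h^{j}_{coker}\ge 0$ is exactly the slack the paper discards via its three short exact sequences), and then finish with the elementary alternating-sum inequality applied to each finite complex together with the dimension counts $\dim\coker^k c(\psi)=m_k-v_{k-\ell}$ and $\dim\ker^{k-\ell+1}c(\psi)=m_{k-\ell+1}-v_{k-\ell+1}$. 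No gaps.
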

\begin{proof}
The $j=0$ case is just the weak inequality of \eqref{wcMIneq}.  So we can assume $j\geq 1$.  We note first that \eqref{leskc} implies
\begin{align}
    0 \to \im h^{k-\ell+1}_{ker} \xrightarrow[]{} H^{k}(\tC(\cpsi)) \xrightarrow[]{} \im h^k_{Cone} \to 0 \label{ses1}\\
    0 \to \im h^{k}_{Cone} \xrightarrow[]{} H^{k}(\coker\cpsi) \xrightarrow[]{} \im h^k_{coker} \to 0 \label{ses2}\\
    0 \to \im h^{k-1}_{coker} \xrightarrow[]{} H^{k-\ell+1}(\ker\cpsi)  \xrightarrow[]{} \im h^{k-\ell+1}_{ker} \to 0 \label{ses3}
\end{align}
By Theorem \ref{MIso}, $b^\psi_k = \dim H^k(\text{Cone}(\cpsi))$.  Thus, we can use  \eqref{ses1} to write
\begin{align}
\sum_{k=0}^j(-1)^{j-k} b^\psi_k&= \sum_{k=0}^j(-1)^{j-k} \left[ \dim (\im h^{k}_{Cone}) +\dim (\im h^{k-\ell+1}_{ker})\right] \nonumber\\
&=\dim (\im h^{j}_{Cone})- \sum_{k=0}^j(-1)^{j-k} \left[  \dim (\im h^{k-1}_{Cone}) - \dim (\im h^{k-\ell+1}_{ker}) \right] \nonumber\\
&=\mspace{-1mu} \dim (\im h^{j}_{Cone})\mspace{-2mu}-\mspace{-3mu} \sum_{k=0}^j(-1)^{j-k}\mspace{-1mu}\left[ \mspace{-1mu} \dim\mspace{-1mu} H^{k-1}(\coker\cpsi) - \dim\mspace{-1mu}H^{k-\ell+1}(\ker\cpsi)\mspace{-1mu}\right] \nonumber\\
&\leq \sum_{k=0}^j(-1)^{j-k} \left[\dim H^{k}(\coker\cpsi)+\dim H^{k-\ell+1}(\ker\cpsi)\right] \label{smineq}
\end{align}
where in the third line, we used \eqref{ses2}-\eqref{ses3}, and in the fourth line \eqref{ses2} again.  Now, because $M$ is assumed to be a closed manifold, both the $\ker\cpsi$ and the $\coker \cpsi$  complex are finitely generated.  In general, for any finitely-generated cochain complex 
$0 \xrightarrow[]{} C^0 \xrightarrow[]{\partial} C^1\xrightarrow[]{\partial} C^2 \xrightarrow[]{\partial} \dots\,,$ 
the dimensions of the associated cohomologies and that of the cochains satisfy the following inequality:
\begin{align*}
\sum_{k=0}^j(-1)^{j-k} \dim H^k(C)  \leq \sum_{k=0}^j(-1)^{j-k}\dim C^k\,.
\end{align*}  
Applying this relation to  \eqref{smineq} results in
\begin{align*}
\sum_{k=0}^j(-1)^{j-k} b^\psi_k 
&\leq \sum_{k=0}^j(-1)^{j-k} \left[\dim(\coker^k\!\cpsi)+ \dim(\ker^{k-\ell+1}\!\cpsi)\right]\\
&= \sum_{k=0}^j(-1)^{j-k}(m_k-v_{k-\ell}+ m_{k-\ell+1}-v_{k-\ell+1})\,.
\end{align*} 
Thus, we obtain the strong cone Morse inequality for the cone complex.
\end{proof}
In the case where $f$ is a perfect Morse function, Corollary \ref{wperf} immediately implies the following.
\begin{cor}\label{perf} 
If $f$ is a perfect Morse function, then the strong cone Morse inequalities become equalities:
\begin{align}
 \sum_{k=0}^j(-1)^{j-k}b^\psi_k &=  \sum_{k=0}^j(-1)^{j-k}(m_{k}-v_{k-\ell}+m_{k-\ell+1}-v_{k-\ell+1})\,. 
\end{align}
\end{cor}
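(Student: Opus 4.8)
The plan is to deduce the equality directly from the index-by-index saturation already established in Corollary \ref{wperf}. Recall that for a perfect Morse function that corollary asserts not merely the strong inequality but the sharper pointwise statement that each individual weak inequality \eqref{wcMIneq} is saturated, namely
\begin{align*}
b_k^\psi = m_k - v_{k-\ell} + m_{k-\ell+1} - v_{k-\ell+1}
\end{align*}
for every $k = 0, 1, \ldots, \dim M + \ell - 1$. This is the genuine content of the present corollary, and it rests on the observation that \eqref{posrel} displays a sum of two non-negative terms on each side; perfection forces the right-hand side to vanish, whence each $v_k = r_k$ and the formula for $b_k^\psi$ above follows from \eqref{bpsik}.

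First I would fix $j$ in the range $0 \le j \le \dim M + \ell - 1$ and note that every index $k$ appearing in the alternating sum $\sum_{k=0}^{j}(-1)^{j-k} b_k^\psi$ satisfies $k \le j \le \dim M + \ell - 1$, so the pointwise equality above applies to each summand. Substituting it term by term then immediately yields
\begin{align*}
\sum_{k=0}^{j}(-1)^{j-k} b_k^\psi = \sum_{k=0}^{j}(-1)^{j-k}\bigl(m_k - v_{k-\ell} + m_{k-\ell+1} - v_{k-\ell+1}\bigr),
\end{align*}
which is exactly the asserted saturation of the strong cone Morse inequality \eqref{scMIneq}.

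There is essentially no obstacle at this stage: once Corollary \ref{wperf} upgrades the weak inequality to an equality at each index, the passage to the alternating sum is purely formal. I would emphasize that this direct route is preferable to attempting to retrace the proof of Theorem \ref{SMTI} and argue that every intermediate inequality (the short exact sequences \eqref{ses1}--\eqref{ses3} and the general cochain estimate applied to $\ker \cpsi$ and $\coker \cpsi$) becomes an equality. All of that effort has already been repackaged into the clean term-wise identity supplied by Corollary \ref{wperf}, so a one-line summation completes the argument.
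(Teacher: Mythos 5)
Your argument is correct and is precisely the paper's own route: the paper derives Corollary \ref{perf} as an immediate consequence of the term-wise equality $b^\psi_k = m_k - v_{k-\ell} + m_{k-\ell+1} - v_{k-\ell+1}$ supplied by Corollary \ref{wperf}, summed with alternating signs. Nothing further is needed.
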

More generally, when the Morse function is not perfect, Theorem \ref{SMTI} implies an analogous strong-version of the inequalities in Lemma \ref{rvrel}(d).

\begin{cor}
For $j=0, \ldots, \dim M + \ell-1$,  
\begin{align*}
    \sum_{k=0}^{j} (-1)^{j-k}((v_{k-\ell}-\txw_{k-\ell})+&(v_{k-\ell+1}-\txw_{k-\ell+1}))\\
    \leq &\sum_{k=0}^{j} (-1)^{j-k}((m_k-b_k)+(m_{k-\ell+1}-b_{k-\ell+1})).
\end{align*}
\begin{proof}
By \eqref{bpsik} and Theorem \ref{SMTI}, we have
\begin{align*}
    \displaystyle\sum_{k=0}^{j} (-1)^{j-k}(b_k-\txw_{k-\ell}+b_{k-\ell+1}-\txw_{k-\ell+1})&=\displaystyle\sum_{k=0}^{j} (-1)^{j-k}b^\psi_k\\
    &\leq \displaystyle\sum_{k=0}^{j} (-1)^{j-k}(m_k- v_{k-\ell}+m_{k-\ell+1}-v_{k-\ell+1})\,.
\end{align*}
The relation of the Corollary is then obtained by moving the $b_k$'s to the right-hand-side and the $v_k$'s to the left-hand-side.
\end{proof}
\end{cor}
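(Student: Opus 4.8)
The plan is to obtain this inequality formally by combining two results already in hand: the exact expression \eqref{bpsik} for $b^\psi_k$ in terms of de Rham data, and the strong cone Morse inequality of Theorem \ref{SMTI}. No new geometric or homological input is required, since everything substantive lives upstream.

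First I would recall from \eqref{bpsik} that
\begin{align*}
\sum_{k=0}^j (-1)^{j-k} b^\psi_k = \sum_{k=0}^j (-1)^{j-k}\bigl(b_k - \txw_{k-\ell} + b_{k-\ell+1} - \txw_{k-\ell+1}\bigr)\,,
\end{align*}
and from Theorem \ref{SMTI} that this same alternating sum is bounded above:
\begin{align*}
\sum_{k=0}^j (-1)^{j-k} b^\psi_k \leq \sum_{k=0}^j (-1)^{j-k}\bigl(m_k - v_{k-\ell} + m_{k-\ell+1} - v_{k-\ell+1}\bigr)\,.
\end{align*}
Chaining these two relations yields a single inequality whose left-hand side carries the $b$'s and $\txw$'s and whose right-hand side carries the $m$'s and $v$'s.

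The remaining step is elementary. Subtracting the left-hand side from the right and grouping terms of matching index, the difference of the two bounds equals
\begin{align*}
\sum_{k=0}^j (-1)^{j-k}\Bigl[(m_k - b_k) + (m_{k-\ell+1} - b_{k-\ell+1}) - (v_{k-\ell} - \txw_{k-\ell}) - (v_{k-\ell+1} - \txw_{k-\ell+1})\Bigr]\,,
\end{align*}
and the chained inequality says precisely that this quantity is non-negative. Transposing the $\txw$-$v$ terms to the left and keeping the $b$-$m$ terms on the right then gives the asserted inequality verbatim. The only thing to watch is the index bookkeeping: each $\txw$ pairs with the $v$ of the same shifted index and each $b$ with the $m$ of the same index, which holds automatically since the substitution from \eqref{bpsik} and the bound from Theorem \ref{SMTI} share the identical shift pattern.

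I do not anticipate a genuine obstacle. The analytic and homological weight is entirely borne by Theorem \ref{SMTI}, which itself rests on the quasi-isomorphism of Theorem \ref{MIso} and the long exact sequence \eqref{leskc}, together with the identity \eqref{bpsik}. This corollary merely reads that chain of inequalities with the de Rham quantities and the Morse quantities separated onto opposite sides, so the ``hard part'' has already been done.
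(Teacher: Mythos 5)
Your proposal is correct and follows exactly the paper's own argument: substitute the identity \eqref{bpsik} into the alternating sum, bound it by the strong cone Morse inequality of Theorem \ref{SMTI}, and transpose the $b_k$'s and $v_k$'s to opposite sides. The index bookkeeping works out as you describe, since both ingredients use the same shift pattern.
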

 
In the special case where $\psi$ is a closed two-form, i.e. $\ell=2$, Theorem \ref{perf} results in an interesting relation.  
\begin{cor}\label{l2e}
For a closed two-form $\psi$, we have the bounds for $k=0, \ldots, \dim M -1$, 
\begin{align*}
0 \leq v_{k}-r_{k} \leq m_{k+1}-b_{k+1}\,.
\end{align*}
\end{cor}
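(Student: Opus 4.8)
The plan is to prove the two inequalities of Corollary \ref{l2e} separately, the left-hand one being essentially immediate and the right-hand one following from the strong inequalities by a telescoping of alternating sums specialized to $\ell = 2$. For the lower bound $0 \le v_k - r_k$, I would simply invoke Lemma \ref{rvrel}(b), which already gives $r_k \le v_k$ for every $k$; nothing further is required.

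For the upper bound $v_k - r_k \le m_{k+1} - b_{k+1}$, I would start from the strong-version inequality established in the corollary immediately preceding \ref{l2e} (itself a consequence of Theorem \ref{SMTI} together with \eqref{bpsik}), now specialized to $\ell = 2$. Writing $A_k := v_k - r_k$ and $B_k := m_k - b_k$, with the convention that $A_k = B_k = 0$ for $k < 0$, that inequality reads
\begin{align*}
\sum_{k=0}^{j} (-1)^{j-k}\left(A_{k-2} + A_{k-1}\right) \;\le\; \sum_{k=0}^{j} (-1)^{j-k}\left(B_k + B_{k-1}\right)
\end{align*}
valid for $j = 0, 1, \ldots, \dim M + 1$, where I have used $k-\ell = k-2$ and $k-\ell+1 = k-1$.

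The key observation I would then exploit is a telescoping identity for alternating sums of consecutive pairs. Introducing the partial alternating sum $S_j(X) := \sum_{k=0}^{j} (-1)^{j-k} X_k$, a one-line index shift (absorbing the out-of-range endpoints into the convention $X_{<0}=0$) gives $\sum_{k=0}^{j}(-1)^{j-k}X_{k-1} = S_{j-1}(X)$ and $\sum_{k=0}^{j}(-1)^{j-k}X_{k-2} = S_{j-2}(X)$, while directly from the definition one has the cancellation $S_n(X) + S_{n-1}(X) = X_n$. Applying this with $X = A$ and $X = B$ collapses each side of the displayed inequality to a single term: the left side becomes $S_{j-1}(A) + S_{j-2}(A) = A_{j-1}$ and the right side becomes $S_j(B) + S_{j-1}(B) = B_j$. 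Hence the inequality reduces to $A_{j-1} \le B_j$, that is $v_{j-1} - r_{j-1} \le m_j - b_j$. Setting $j = k+1$ and noting that the admissible range $j = 1, \ldots, \dim M$ corresponds precisely to $k = 0, \ldots, \dim M - 1$ (well within $j \le \dim M + 1$) yields the claimed bound.

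I expect no genuine obstacle here: once the strong alternating inequality is in hand, the argument is purely combinatorial. The only point demanding care is the index bookkeeping in the two shift identities together with the boundary convention $A_k = B_k = 0$ for $k < 0$, which is exactly what guarantees the telescoping is clean and leaves no spurious endpoint contributions. Verifying the elementary identity $S_n(X)+S_{n-1}(X)=X_n$ is the crux of the computation, and everything else is substitution.
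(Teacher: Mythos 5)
Your proposal is correct and follows essentially the same route as the paper: the paper also specializes the strong inequalities to $\ell=2$ and telescopes the alternating sums (computing $\sum_{k=0}^{j}(-1)^{j-k}b^\psi_k = b_j - r_{j-1}$ exactly from \eqref{bpsik} and bounding it by $m_j - v_{j-1}$ via Theorem \ref{SMTI}), which is just a reorganization of your collapse of $S_{j-2}(A)+S_{j-1}(A)=A_{j-1}$ and $S_j(B)+S_{j-1}(B)=B_j$. Your index bookkeeping and range check ($j=k+1$ with $k=0,\ldots,\dim M-1$) are both sound.
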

\begin{proof}
When $\ell=2$,  equation \eqref{bpsik} implies for $j\geq 1$
\begin{align}\label{l2e1}
\displaystyle\sum_{k=0}^{j} (-1)^{j-k}b^\psi_k = \displaystyle\sum_{k=0}^{j} (-1)^{j-k}(b_k+r_{k-2}- b_{k-1}-r_{k-1}) = b_j - r_{j-1}\,,
\end{align}
and similarly, Theorem \ref{SMTI} implies for $j\geq 1$
\begin{align}\label{l2e2}
\displaystyle\sum_{k=0}^{j} (-1)^{j-k}b^\psi_k \leq\displaystyle\sum_{k=0}^{j} (-1)^{j-k}(m_k- v_{k-2}+m_{k-1}-v_{k-1})=m_j-v_{j-1}\,.
\end{align}
Combining \eqref{l2e1}-\eqref{l2e2} gives the relation $v_{j-1}-r_{j-1} \leq m_{j}-b_{j}$.
\end{proof}
Corollary \ref{l2e} interestingly shows that when $\ell=2$, the rank of the $\cpsi$ map on $C^k(M,f)$ is constrained, not just by $m_k$, the number of critical points of index $k$, as would be expected, but also by $m_{k+1}$, relative to $b_{k+1}$.  Corollary \ref{l2e} also gives a bound for the difference between $m_k$ and $b_k$.  The bound becomes especially simple in the case when $\psi$ is an exact form.  For an exact $\psi$ implies $r_k=0$ and we thus obtain the relation in Corollary \ref{bmv} 
\begin{align}\label{bmvIneq1}
b_k \leq m_k - v_{k-1}\,,\qquad k=1, \ldots, \dim M\,.  
\end{align}


\section{Example of the two-sphere}

To illustrate the properties described in the previous section, we here consider the cone Morse cohomology and its Morse-type inequalities in the context of the two-sphere.  We will show explicitly that the cone cohomology $H(\tC(\psi))\cong H(\tC(\com))$ can vary with $[\psi]$, and also present examples where the cone Morse bounds vary under change of the closed-form $\psi$, the metric $g$, and the Morse function $f$.

Let $M=S^2=\{(x,y,z)\in\mathbb{R}^3\,|\, x^2+y^2+z^2=1\}$ be the unit sphere in $\mathbb{R}^3$.  In our discussion, we will interchangeably use both spherical coordinates $(\phi, \theta)$ and Euclidean coordinates $(x, y, z) = (\sin\phi \cos \theta, \sin\phi \sin \theta, \cos\phi)$ to describe functions and forms on $S^2$.  We will denote by $\om_0 := \sin\phi \,d\phi \w d\theta$ the standard volume form with volume $\int_{S^2} \om_0 = 4\pi\,$.    Throughout, we will let $\psi\in \Om^2(S^2)$.  Note that in two dimensions, all two forms are trivially $d$-closed.  
  
The dimension of the cone cohomology, $b_k^\psi = \dim H^k(\tC(\psi))$, can be quickly calculated using \eqref{bpsik}.  We find
\begin{align}\label{bStg}
b^\psi_k = 
\begin{cases}
1 &  k =0, 3 \\
1-r_0  &  k =1, 2
\end{cases}
\end{align}
where $r_0=\rk [\psi] |_{H^0_{dR}}$.  Let us note that $r_0=1$ whenever $[\psi]\in H^2_{dR}(S^2)$ is a non-trivial class, or equivalently, $\int_{S^2} \psi \neq 0$. 

\begin{ex}\label{ex1} {\bf [Change in $b^\psi_k=\dim H^k(\tC(\psi))$ as $[\psi]$ varies.]} 
Consider the one-parameter family 
\begin{align}\label{psis}
\psi_s=(y+s)\,\om_0\,, \qquad\qquad s\in [-1, 1]\,.
\end{align}
Note that $\psi_s$ is not a symplectic form as it vanishes along $y=-s$.  In fact,  the cohomology class $[\psi_s]= s [\om_0]$, and therefore, $r_0=1$ for $s \neq 0$ and $r_0=0$ for $s=0$.  It follows from \eqref{bStg} that at the special value of $s=0$, $b^{\psi_s}_k$ increases by one for $k=1,2$.
\end{ex}
    
\medskip    
    
We now turn to the cone Morse inequalities.  To do so, we need to introduce a Morse function on $S^2$.  The generic Morse function is not perfect.  An example of a non-perfect Morse function which we will use for the remainder of this section is    
\begin{align}\label{S2Morsef}
f=x^2+2y^2 + 3 z^2
\end{align}
restricted to $S^2$.  This Morse function has six critical points which can be easily seen by expressing $f$ in terms of only two variables applying the unit circle constraint (see Figure \ref{S26pts}):
\begin{figure}[t]
    \centering
\includegraphics[scale=.3]{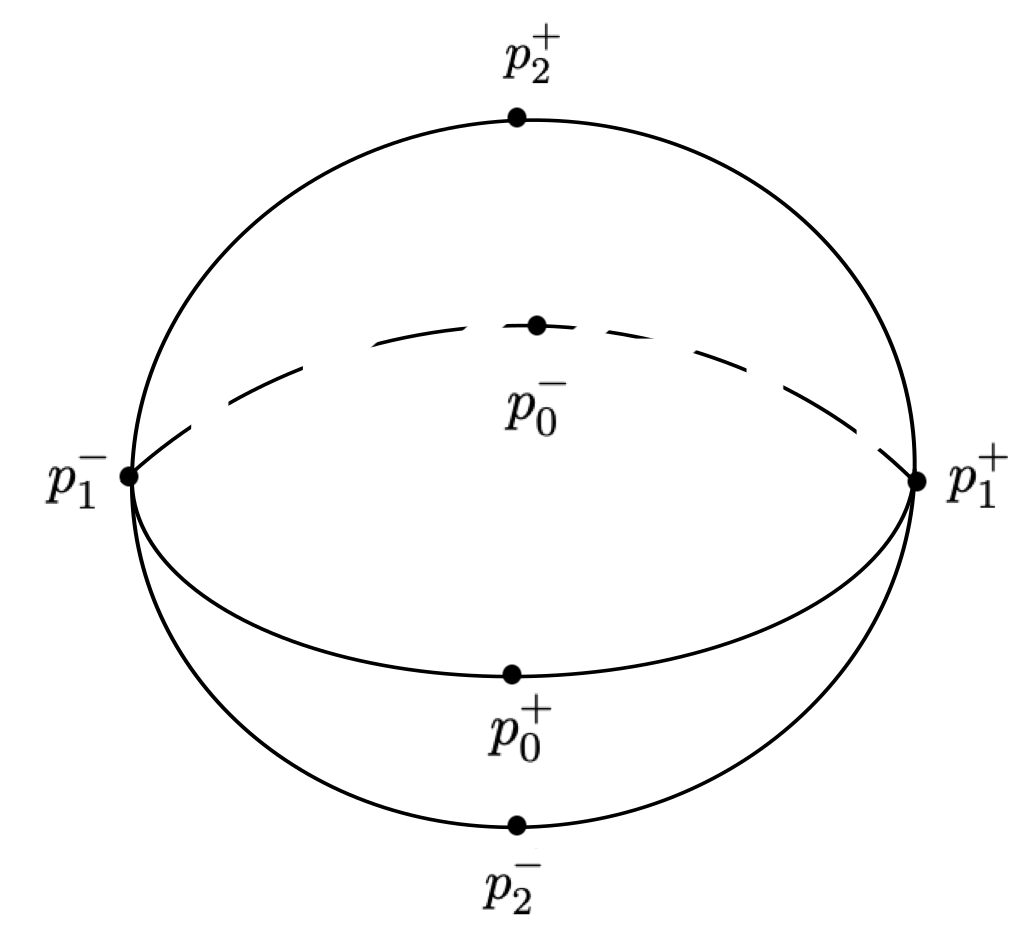}
\caption{Two sphere in $\mathbb{R}^3$ with six critical points of the Morse function $f=x^2+2y^2+3z^2$.}
    \label{S26pts}
\end{figure} 
\begin{alignat*}{3}
f&=1+ y^2 + 2z^2 &\qquad  &(y^2 + z^2 \leq 1)\qquad\qquad&  \text{index~0~critical~points:~~}& p^\pm_0 = (\pm 1 , 0 , 0 )\,; \\
f&=2-x^2+z^2 &\qquad &(x^2 + z^2 \leq 1)\qquad\qquad& \text{index~1~critical~points:~~}
&p^\pm_1 = (0 , \pm 1 , 0 )\,;\\
f&=3-2x^2-y^2 &\qquad &(x^2 + y^2 \leq 1)\qquad\qquad&  \text{index~2~critical~points:~~} &p^\pm_2 = (0 , 0, \pm 1 )\;.
\end{alignat*}
Clearly, $m_k \neq b_k(S^2)$.  Below, we will write a generic element of $C^k(S^2, f)$ for $k=0, 1, 2$, each generated by two critical points, by the linear combination $a_k^+ p_k^+ + a_k^- p_k^-$  with the coefficients $a_k^+, a_k^- \in \mathbb{R}\,$.

As for the Riemannian metric, we will use as default $g_0 = d\phi^2 + \sin^2 \phi \,\, d\theta^2$, the standard sphere metric induced from the standard Euclidean metric on $\mathbb{R}^3$.  The pair $(f, g_0)$ determines the moduli space of gradient flow lines which goes into the calculation of $\com: C^k(S^2, f) \to C^{k+2}(S^2, f)$.  Only for $k=0$ is the $\com$ map non-trivial and this corresponds to integrating $\psi$ over the moduli space  $\overline{\CM(p_2^{\pm}, p_0^{\pm})}$ which are the four quarter spheres determined by ($x\leq 0$ or $x\geq 0$) and $(z\leq0$ or $z\geq 0$).

The weak cone Morse inequality in \eqref{wcMIneq} gives for $(S^2, f, g_0)$ the bound
\begin{align}\label{wmStg}
b^\psi_k \leq 
\begin{cases}
2 &  k =0, 3 \\
4-v_0  &  k =1,2
\end{cases}
\end{align}
where $v_0 = \rk c(\psi)|_{C^0(S^2, f)}$.  In the following three examples below, we will demonstrate the dependence of the above cone Morse bound, and specifically the dependence of $v_0$, on $\psi$, the metric, and the Morse function.
 
\begin{ex}{\bf [Change in the cone Morse bound as $[\psi]$ varies.]}
We take $\psi$ to be again the one-parameter family $\psi_s$ in \eqref{psis} of Example \ref{ex1}. Let us calculate $v_0 = \rk c(\psi_s)|_{C^0(S^2, f)}$.  The operator $c(\psi_s)$ acting on $a_0^+ p_0^+ + a_0^- p_0^- \in C^0(S^2, f)\,$ generated by the two index zero critical points $(p^+_0, p^-_0)$ can be found by integrating $\psi_s$ over the quarter spheres, and has the following matrix form:
\[\begin{pmatrix}
    a_2^+ \\ a_2^-
\end{pmatrix}=c(\psi_s) \begin{pmatrix}
    a_0^+ \\ a_0^-
\end{pmatrix}= 
\pi\begin{pmatrix} 
s\,   & s\, \\ s\,  & s\,
\end{pmatrix}
\begin{pmatrix}
    a_0^+ \\ a_0^-
\end{pmatrix}\,,\]
mapping into $a_2^+ p_2^+ + a_2^- p_2^- \in C^2(S^2, f)\,$.   
For the rank, we find that $v_0=1$ for $s\neq 0$, and $v_0=0$ for $s=0$.  Hence, by \eqref{wmStg}, the weak cone Morse bound for $b^{\psi_s}_k$ for $k=1,2$ increases by one at $s=0$.  This coincides with the increase in $b^{\psi_s}_k$ at $s=0$ as calculated in Example \ref{ex1}.
\end{ex}

\begin{ex}\label{ex3}{\bf [Change in the cone Morse bound as $\psi$ varies within a fixed de Rham class.]}  The weak cone Morse bound can also vary within the same de Rham cohomology class $[\psi]$. Consider the following one-parameter family of $\psi$:
\begin{align}\label{psit}
\psi_t=(1+tx+tz)\,\om_0\,, \qquad -\dfrac{1}{2}<t<\dfrac{1}{2}\,.
\end{align}
Note that $[\psi_t]=[\om_0]\in H^2_{dR}(S^2)$ for all $t\in (-\frac{1}{2}, \frac{1}{2})$, and so $b^{\psi_t}_k$ does not vary.  However, for the weak cone Morse bound, the $c(\psi_t)$ map takes the form
\[\begin{pmatrix}
    a_2^+ \\ a_2^-
\end{pmatrix}=c(\psi_t) \begin{pmatrix}
    a_0^+ \\ a_0^-
\end{pmatrix}=\pi \begin{pmatrix} 1+t & 1 \\ 1 & 1-t \end{pmatrix}\begin{pmatrix}
    a_0^+ \\ a_0^-
\end{pmatrix}\]
and has rank $v_0=2$ for $t\neq 0$, and $v_0=1$ for $t=0$.  This gives the bound for $k=1,2$
\begin{align*}
b^{\psi_t}_k \leq 
\begin{cases}
3  &  t=0 \\
2  &  t\neq 0
\end{cases}
\end{align*}
even though $b^{\psi_t}_k$ remains constant.  This example demonstrates that the $v_k$'s generally depend on $\psi$ as a closed form, in contrast with the $b^\psi_k${}'s, which only vary with the cohomology class $[\psi]$. 
\end{ex}

\begin{rmk}
Notice that the one-parameter family of closed two-forms $\psi_t$ in \eqref{psit} are all non-degenerate, and hence, symplectic.  Being in the same cohomology class, Moser's theorem implies the existence of  a one-parameter family of symplectomorphism $\varphi_t: S^2\to S^2$ such that $\varphi^*_t\om_t = \om_0$.  We can use this symplectomorphism to pull back $(S^2, \om_t, f, g_0)$ to $(S^2, \om_0, \varphi^*_t f, \varphi^*_t g_0)$.  As symplectomorphisms leave unchanged $m_k$'s and $v_k$'s, we can reinterpret the above example as varying the Morse-Smale pair $(\varphi^*_t f, \varphi^*_t g_0)$ while keeping fixed the closed-form $\psi=\om_0$ on $S^2$. It thus also represents an example where the cone Morse bound $b^\psi_k$ changes when the Morse-Smale pair $(f, g)$ is varied.
\end{rmk}
\begin{ex}{\bf [Change in the cone Morse bound as the Riemannian metric $g$ varies.]}
\begin{figure}[t]
    \centering
\includegraphics[scale=.3]{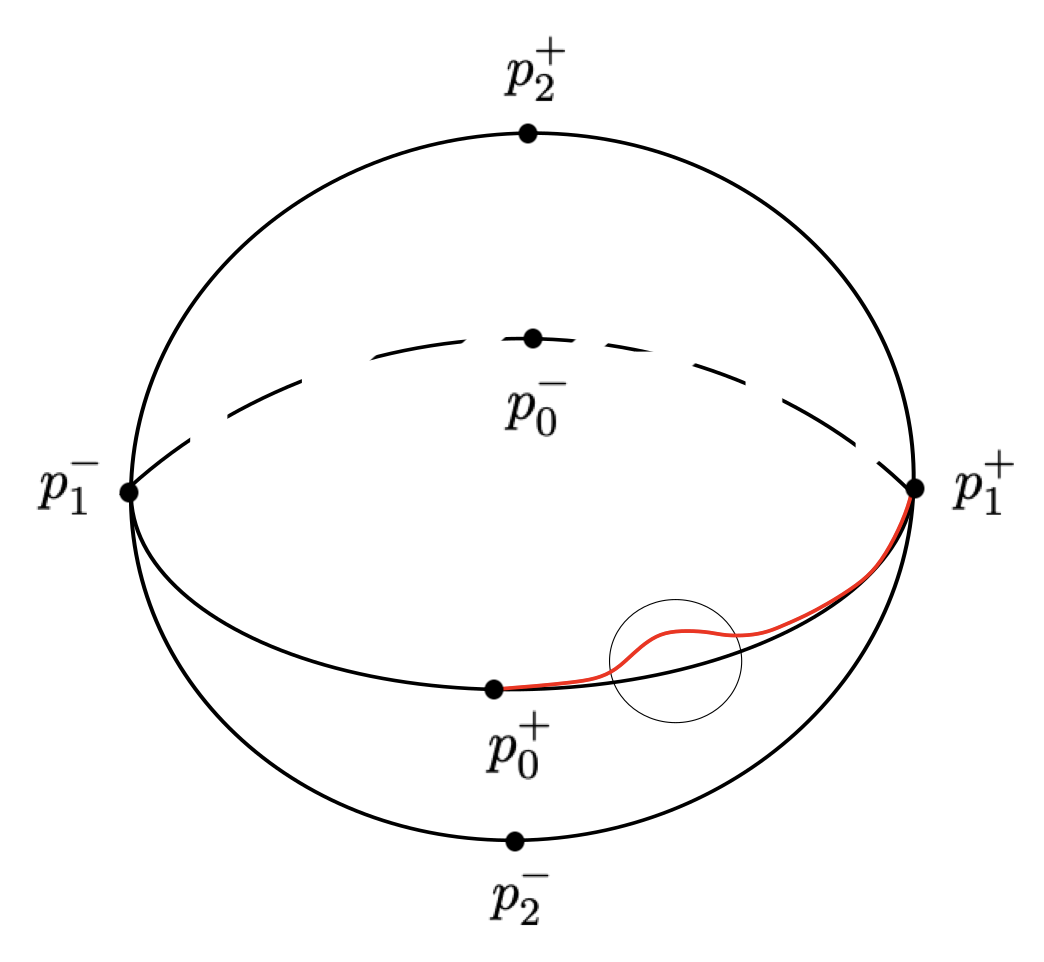} 
\caption{Two sphere with modified metric deforming the flow line from $p_1^+$ to $p_0^+$.}
    \label{S2MetMod}
\end{figure}
We demonstrate here that the bound for $b^\psi_k$ can jump just by varying the metric.  Let $\psi=\om_0$ the standard area form on $S^2$. For the standard round metric, $g_0$, the moduli space $\CM(p^\pm_2, p^\pm_0)$ of flow lines are just the four quarters of the sphere.  Suppose we modify this round metric within a small neighborhood of a point that is on the flow line between $p^+_1$ and $p^+_0$ (the small circle on $S^2$ in Figure \ref{S2MetMod}). In doing so, we can change the gradient flow lines, so the new boundary is the red line above. As such that we remove an area of $\epsilon$ that is between the red flow line and the original black line. Thus we subtract $\epsilon$ from $\CM(p_2^+, p_0^+)$ and add it to $\CM(p_2^-, p_0^+)$ 
\[\begin{pmatrix}
    a_2^+ \\ a_2^-
\end{pmatrix}=c(\om_0) \begin{pmatrix}
    a_0^+ \\ a_0^-
\end{pmatrix}=\pi \begin{pmatrix} 1-\epsilon & 1 \\ 1+\epsilon & 1 \end{pmatrix}\begin{pmatrix}
    a_0^+ \\ a_0^-
\end{pmatrix}\]
In this case, the rank of $c(\om_0)|_{C^0(S^2,f)}$ jumps to $v_0=2$ when $\epsilon\neq 0$. By \eqref{wmStg}, this correspondingly decreases the bound on $b^{\om_0}_k$ for $k=1,2,$ by one, and hence, gives an explicit example where the bound varies with the metric.
\end{ex}
\begin{rmk}
The cone cohomology dimension $b^\psi_k$ depends only on the cohomology class $[\psi]$.  We have seen above how the cone Morse inequalities, can explicitly depend on the Morse function $f$, the metric $g$ and even the representative form $\psi$ in $[\psi]$.  We can improve the bound by varying $g$ and $\psi$ within the class $[\psi]$ to maximize $v_k=\rk c(\psi)|_{C^k(M,f)}$.  Changing the Morse function $f$ could possibly change $m_k$ as well.  In the above examples, the variations considered improved the bounds but did not reach the actual value of $b^\psi_k$ as given in \eqref{bStg}.  If we have chosen to work with a perfect Morse function on $S^2$, then by Corollary \ref{wperf}, we would have obtained the expected $b^\psi_k$ exactly. 
\end{rmk}

Finally, we show on $S^2$ using the same non-perfect Morse function $f=x^2+2y^2+3z^2$ how we can bound the Betti number as in \eqref{bmvIneq1} (Corollary \ref{bmv}) by considering different exact differential two-forms for $\psi$. 

\begin{ex}{\bf [Change in the bound of the Betti number from varying the exact two-form $\psi$.]} 
Let $\psi=d\alpha$ be an exact two-form on the sphere. We evaluate 
\begin{align*}
c(\psi)q=\displaystyle \sum_r\left(\int_{\overline{\CM(r,q)}}d\alpha\right) r =\displaystyle \sum_r\left(\int_{\partial {\overline{\CM(r,q)}}}\alpha\right) r\,.
\end{align*}  
We will use the notation where a gradient flow curve from $p_i^+$ to $p_j^-$ is labelled by $\gamma_{ij}^{+-}$ (and $\gamma_{ij}^{-+}$ represents the flow curve from $p_i^-$ to $p_j^+$).  Explicitly, we have
\begin{align*}
    \partial \overline{\CM(p_2^+,p_0^+)} &=\gamma_{21}^{+-}+\gamma_{10}^{-+}-\gamma_{21}^{++}-\gamma_{10}^{++}\, ,\\
    \partial \overline{\CM(p_2^-,p_0^+)} &=\gamma_{21}^{-+}+\gamma_{10}^{++}-\gamma_{21}^{--}-\gamma_{10}^{-+}\, , \\
    \partial \overline{\CM(p_2^+,p_0^-)} &=\gamma_{21}^{++}+\gamma_{10}^{+-}-\gamma_{21}^{+-}-\gamma_{10}^{--}\, , \\
    \partial \overline{\CM(p_2^-,p_0^-)} &=\gamma_{21}^{--}+\gamma_{10}^{--}-\gamma_{21}^{-+}-\gamma_{10}^{+-}\, .
\end{align*}
Let us further denote by $c_{21}^{+-}=\int_{\gamma_{21}^{+-}}\alpha$ and similarly for other line integral over $\alpha$.  The $c(d\alpha)$ map acting on $a_0^+ p_0^+ + a_0^- p_0^- \in C^0(S^2, f)\,$ then takes the following form. 
\begin{align*}
\begin{pmatrix}
    a_2^+ \\ a_2^-
\end{pmatrix}=
\begin{pmatrix}
 c_{21}^{+-}+c_{10}^{-+}-c_{21}^{++}-c_{10}^{++} & \mspace{10mu} c_{21}^{++}+c_{10}^{+-}-c_{21}^{+-}-c_{10}^{--} \\ 
 c_{21}^{-+}+c_{10}^{++}-c_{21}^{--}-c_{10}^{-+} & \mspace{10mu} c_{21}^{--}+c_{10}^{--}-c_{21}^{-+}-c_{10}^{+-}
\end{pmatrix}
\begin{pmatrix}
    a_0^+ \\ a_0^-
\end{pmatrix}\,.
\end{align*}
This $c(d\alpha)$ matrix has the following determinant: 
\begin{align*}
(c_{21}^{++}-c_{21}^{-+}+c_{21}^{--}-c_{21}^{+-})(c_{10}^{++}-c_{10}^{-+}+c_{10}^{--}-c_{10}^{+-})\,.
\end{align*}
Note that the first factor is the line integral of $\alpha$ over a meridian and the second factor the line integral of $\alpha$ over the equator. Thus, if we work with an one-form $\alpha$ such that both factors are non-zero (such one-forms are abundant, for instance, take $\alpha$ to be a positive one-form localized along $\gamma_{21}^{++}$ and $\gamma_{10}^{++}$), then $v_0=2$. From \eqref{bmvIneq1}, we thus find $b_1 \leq m_1-v_0= 0$, showing that the first Betti number of the two-sphere must be zero.
\end{ex}

\

\

\appendix
\section{Morse theory conventions and Leibniz rule}\label{AppA}
We describe here the conventions used to define the differential map $\del$ in the Morse cochain complex and also the orientations of the submanifolds which are integrated over in the $c(\psi)$ map of \eqref{cpsidef}.  The main aim of this appendix is to prove the following:
\begin{lem}[Leibniz Rule on forms in Morse cohomology]
Let $\psi \in \Omega^{\ell}(M)$ then 
\begin{align}\label{Alemma}
\del\, c(\psi)+(-1)^{\ell+1}c(\psi)\,\del=-c(d\psi)\,.
\end{align} 
\end{lem}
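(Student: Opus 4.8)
The plan is to derive the Leibniz relation \eqref{Alemma} as a consequence of Stokes' theorem applied to the compactified moduli spaces of gradient trajectories. The object to analyze is the $(\ell+1)$-dimensional moduli space $\overline{\CM(r,q)}$ attached to a pair of critical points with $ind(r)=ind(q)+\ell+1$; integrating $d\psi$ over it produces precisely the coefficient of $r$ in $c(d\psi)\,q$. Since $\int_{\overline{\CM(r,q)}} d\psi = \int_{\partial\overline{\CM(r,q)}}\psi$ by Stokes, everything reduces to identifying the codimension-one boundary of $\overline{\CM(r,q)}$ and evaluating $\psi$ on each boundary stratum.

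First I would fix the orientation conventions once and for all: choose orientations of the unstable manifolds $W^u(p)$ at every critical point $p$, and use these, together with the orientation of $M$ and the Morse--Smale transversality hypothesis, to orient each $\CM(p,q)=W^u(p)\cap W^s(q)$, its reparametrized quotient $\widetilde{\CM}(p,q)$, and their compactifications. This is exactly the orientation data of Austin--Braam that the sign in \eqref{Alemma} is meant to refine.

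Second, I would invoke the standard compactness-and-gluing description of $\partial\overline{\CM(r,q)}$ for $ind(r)-ind(q)=\ell+1$: its codimension-one faces are once-broken trajectories, splitting as $\overline{\CM(r,p)}\times\widetilde{\CM}(p,q)$ with $ind(p)=ind(q)+1$, and as $\widetilde{\CM}(r,p)\times\overline{\CM(p,q)}$ with $ind(p)=ind(r)-1$. In the first family $\widetilde{\CM}(p,q)$ is a finite signed set counted by $n(p,q)$, and in the second family $\widetilde{\CM}(r,p)$ is counted by $n(r,p)$. Integrating $\psi$ over the boundary therefore factors the zero-dimensional pieces out as these counts, and the two families reassemble into the $r$-coefficients of $c(\psi)\,\del q$ and $\del\, c(\psi)\, q$ respectively. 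Summing over all $r$ and $q$ then yields $c(d\psi)=\pm\,c(\psi)\del\pm\del\, c(\psi)$, which is \eqref{Alemma} up to the determination of signs.

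The main obstacle is pinning down those two signs, which is the entire purpose of the appendix. This requires comparing the boundary orientation on $\partial\overline{\CM(r,q)}$ induced from the chosen orientation of $\overline{\CM(r,q)}$ with the product orientations on the strata $\overline{\CM(r,p)}\times\widetilde{\CM}(p,q)$ and $\widetilde{\CM}(r,p)\times\overline{\CM(p,q)}$. The degree $\ell$ of $\psi$ enters through the Koszul sign incurred when $\psi$, a form supported on the $\overline{\CM}$ factor, is commuted past the zero-dimensional reparametrization factor in the second family; this is the source of the $(-1)^{\ell+1}$ weighting of $c(\psi)\del$, while a fixed outward-normal-first boundary convention accounts for the overall $-$ sign. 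I would verify the bookkeeping against the lowest-dimensional model cases, for instance $\ell=1$ with a single broken trajectory, before asserting the signs in general.
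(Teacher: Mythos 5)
Your proposal follows essentially the same route as the paper's Appendix A: apply Stokes' theorem to $\overline{\CM(r,q)}$ with $ind(r)-ind(q)=\ell+1$, decompose the codimension-one boundary into the two families of once-broken trajectories, factor out the zero-dimensional counts $n(\cdot,\cdot)$ to recognize $\del\,c(\psi)$ and $c(\psi)\,\del$, and fix the two signs by comparing the outward-normal-first boundary orientation with the chosen product orientations of the strata (the paper does this with Clifford-multiplied orthonormal frames, after first checking the $\psi=h\in C^\infty(M)$ case). One small caveat: the relative $(-1)^{\ell}$ between the two boundary families is not a Koszul sign from commuting $\psi$ past a zero-dimensional factor (which would be trivial); it arises from where the breaking direction sits inside the oriented frame of $\CM(r,q)$, which is exactly what your planned orientation comparison would reveal.
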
 
This formula  appeared in Austin-Braam \cite{AB} and Viterbo \cite{Viterbo} though with ambiguous signs.  To set our conventions and prove the Lemma, we start with a brief background.

Let $\phi_t$ be the flow of the vector field $-\nabla f$.  For a critical point $r \in Crit(f)$, the stable $S_r$ and unstable $U_r$ submanifolds are defined to be
\begin{align*}
S_{r}=\{x \in M: \lim_{t \to \infty} \phi_t(x)=r\}\,, \qquad U_{r}=\{x \in M: \lim_{t \to -\infty} \phi_t(x)=r\}\,, 
\end{align*}
and the moduli spaces of gradient lines between two critical points, $q, r \in Crit(f)$, 
\begin{align*}
\CM(r, q)=S_q \cap U_r \,,\qquad \widetilde{\CM}(r,q)= \frac{S_q \cap U_r} {\{x \sim y: \phi_t(x)=y \text{ for some } t \in \mathbb{R}\}}.  
\end{align*}  
 We define the orientation of the moduli spaces similar to that in Austin-Braam \cite{AB}*{Section 2.2}.  For an oriented manifold $M$, we first specify an orientation for either the stable submanifolds, or equivalently, the unstable ones.  The orientation of one type determines the other by the relation
\begin{align}\label{SUMo}
[S_r][U_r] = [M]\,.
\end{align}
The orientation of the moduli space is then just the orientation of the transversal intersection which can be expressed as
\begin{align}
[\CM(r,q)]&=[U_r][M]^{-1}[S_q] = [U_r][U_q]^{-1}\,.\label{Mrq}
\end{align}
We will also take as convention 
\begin{align}
[\CM(r,q)]&=[\widetilde{\CM}(r,q)][\nabla f]\,.
\label{tMrq}
\end{align}

In the special case when $ind(r) = ind(q)+1$, $\CM(r,q)$ is an oriented one-dimensional submanifold of gradient flow lines and $\widetilde{\CM}(r,q)$ is an oriented collection of points. Also, recall that the Morse differential is defined by $\del q =\sum\limits_r n(r,q)\,r$ where
\begin{align}
n(r,q)=\#\widetilde{\CM}(r,q)\,.
\end{align}  
It follows from \eqref{tMrq} that $n(r,q)$ is equal to the number of gradient lines flowing in the direction of $\nabla f$ minus the number flowing in the direction of $-\nabla f$.

As an example of why \eqref{Alemma} has the correct signs, we first prove the zero-form case with $\psi=h$, a function.
\begin{cor}
If $h \in C^\infty(M)$, then $-c(dh)=\del c(h)-c(h)\del$. 
\end{cor}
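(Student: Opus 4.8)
The plan is to evaluate both sides of the identity on an arbitrary index-$k$ generator $q_k$ and compare coefficients. The first observation I would record is that for a $0$-form the operator $c(h)$ is simply multiplication by the value of $h$ at the critical point. Indeed, when $\ell=0$ the moduli space $\CM(r_k,q_k)$ has dimension $ind(r)-ind(q)=0$; by Morse--Smale transversality it is empty unless $r=q$ (a nonempty $0$-dimensional set carrying a free flow would be impossible), while $\overline{\CM(q_k,q_k)}$ is the single point $\{q\}$. Hence $\int_{\overline{\CM(r_k,q_k)}} h$ vanishes off the diagonal and equals $h(q)$ on it, so that $c(h)\,q_k = h(q)\,q_k$.

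With this in hand the right-hand side is immediate. Using $\del q_k = \sum_{ind(r)=k+1} n(r,q)\,r$ together with $c(h)\,r = h(r)\,r$, I would compute
\begin{align*}
(\del\, c(h) - c(h)\,\del)\,q_k = h(q)\!\!\sum_{ind(r)=k+1}\!\! n(r,q)\,r \;-\!\! \sum_{ind(r)=k+1}\!\! n(r,q)\,h(r)\,r = \!\!\sum_{ind(r)=k+1}\!\! n(r,q)\,(h(q)-h(r))\,r.
\end{align*}

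For the left-hand side I would apply Stokes' theorem to the $1$-dimensional moduli space $\overline{\CM(r_{k+1},q_k)}$, writing $c(dh)\,q_k = \sum_{ind(r)=k+1}\bigl(\int_{\partial\overline{\CM(r_{k+1},q_k)}} h\bigr)\,r$. The key geometric input is the identification of this boundary. Since $ind(r)=ind(q)+1$ leaves no room for an intermediate critical point, there are no broken-trajectory contributions, and $\overline{\CM(r_{k+1},q_k)}$ is a disjoint union of closed flow arcs whose only boundary points are the endpoints $r$ and $q$. Orienting each arc by the convention $[\CM(r,q)]=[\widetilde{\CM}(r,q)][\nabla f]$ of \eqref{tMrq}, the induced boundary is $n(r,q)\,(\{r\}-\{q\})$, so that $\int_{\partial\overline{\CM(r_{k+1},q_k)}} h = n(r,q)\,(h(r)-h(q))$. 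Therefore $-c(dh)\,q_k = \sum_{ind(r)=k+1} n(r,q)\,(h(q)-h(r))\,r$, which agrees with the right-hand side.

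The main obstacle is the sign in this last step: the whole identity hinges on orienting the compactified flow arcs consistently with $[\CM(r,q)]=[\widetilde{\CM}(r,q)][\nabla f]$ and on checking that the induced orientation on the two endpoints produces $\{r\}-\{q\}$, weighted by $n(r,q)$, rather than its negative. I would verify this carefully against the orientation conventions \eqref{SUMo}--\eqref{tMrq}, since a flip there would instead yield $c(dh)=\del\, c(h)-c(h)\,\del$. Everything else is a routine dimension count and a single application of Stokes' theorem, which is precisely why this $0$-form computation serves as a clean check of the signs in the general Leibniz rule \eqref{Alemma}.
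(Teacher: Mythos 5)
Your proof is correct and follows essentially the same route as the paper's: both reduce to the observation that $c(h)q_k=h(q_k)q_k$ and that integrating $dh$ over the oriented one-dimensional moduli space $\overline{\CM(r_{k+1},q_k)}$ yields $n(r_{k+1},q_k)\,(h(r_{k+1})-h(q_k))$, after which the identity is a one-line rearrangement. Your explicit dimension-count justification of $c(h)q_k=h(q)q_k$ and the remark that no broken trajectories contribute to the boundary are details the paper leaves implicit, but the argument is the same.
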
 
\begin{proof}
Evaluating $c(dh)$  
by integrating over the gradient curves with orientation, we have
\begin{align*} 
    c(dh)q_k&=\sum_{r_{k+1}}\left( \int_{\overline{\CM(r_{k+1}, q_k)}} dh\right)r_{k+1}\\ 
    &=\sum_{r_{k+1}}\left(n(r_{k+1}, q_k) (h(r_{k+1})-h(q_k))\right)r_{k+1} \\
    &=\sum_{r_{k+1}} h(r_{r+1}) n(r_{k+1}, q_k)r_{k+1} -\sum_{r_{k+1}}n(r_{k+1}, q_k)h(q_k)r_{k+1} \\
    &=c(h)\del q_k -\del c(h) q_k = (c(h)\del -\del c(h))q_k 
\end{align*} 
where  $c(h)q_k=(\int_{\overline{\CM(q_k, q_k)}}h)q_k = h(q_k)q_k$. Thus, having taken into account our orientation convention, we find that $-c(dh)=\del c(h)
-c(h)\del\,.$
\end{proof}
To prove \eqref{Alemma} in general, we re-express the right-hand side by Stokes' theorem
\[c(d\psi)q_k=\sum_{r_{k+\ell+1}}\left(\int_{ \overline{\CM(r_{k+\ell+1}, q_k)}}d\psi\right) r_{k+\ell+1}=\sum_{r_{k+\ell+1}}\left(\int_{\del \CM(r_{k+\ell+1},q_k)}\psi\right) r_{k+\ell+1}\,.\]
The relevant components of $\del \CM(r_{k+\ell+1}, q_k)$ for integrating $\psi$ consists of
\begin{align*}
 \left\{\bigcup_{p_{k+\ell}} \CM(p_{k+\ell}, q_k) \times \widetilde{\CM}(r_{k+\ell+1}, p_{k+\ell})\right\} 
   \;\bigcup\; \left\{\bigcup_{p_{k+1}} \CM(r_{k+\ell+1}, p_{k+1}) \times \widetilde{\CM}(p_{k+1}, q_k) \right\}\,. 
\end{align*}
This implies up to signs 
\begin{align}
    &c(d\psi)q_k \nonumber\\
    &=\!\!\sum_{r_{k+\ell+1}}\left[\pm\sum_{p_{k+\ell}} \int_{\overline{\CM(p_{k+\ell}, q_k)}\times \widetilde{\CM}(r_{k+\ell+1}, p_{k+\ell})}\!\!\psi\,\,  
    \pm\,\,\sum_{p_{k+1} }
    \int_{\overline{\CM(r_{k+\ell+1}, p_{k+1})}\times \widetilde{\CM}(p_{k+1}, q_{k})}\psi\right]r_{k+\ell+1}\nonumber\\
&=\!\!\!\!\sum_{r_{k+\ell+1}}\!\!\!\left[\pm\sum_{p_{k+\ell}} \left(\int_{\overline{\CM(p_{k+\ell}, q_k)}}\! \psi \right)\!n(r_{k+\ell+1}, p_{k+\ell})
    \pm\sum_{p_{k+1}} \!\!n(p_{k+1}, q_k)\!\!\left(\int_{\overline{\CM(r_{k+\ell+1}, p_{k+1})}}\!\psi \right)\!\right]\!r_{k+\ell+1}\nonumber\\
    & =\pm\del c(\psi)q_k\pm c(\psi)\del q_k \label{SemiStokes}
\end{align}
To fix the signs, we will proceed in two steps. First, we make a choice of the orientation of the stable and unstable manifolds at the critical points $\{q_k, p_{k+1}, p_{k+l}, r_{k+l+1}\}$. By \eqref{Mrq}, this determines the orientation of the various moduli spaces that arise in the Stokes' theorem calculation above. Then in step two, we compare the orientation of the relevant boundary components, $\CM(p_{k+\ell}, q_k)\times\widetilde{\CM}(r_{k+l+1}, p_{k+l})$ and 
$\CM(r_{k+\ell+1},p_{k+\ell})\times {\widetilde \CM}(p_{k+1},q_k)$, 
with the orientation needed to satisfy Stokes' theorem.  The relative difference in the orientations will determine the signs in \eqref{SemiStokes}. 

\

\textit{Step 1:
Computing the orientation of the moduli spaces.}

By \eqref{Mrq}, the orientation of a moduli space $\CM(r,q)$ can be determined by the orientation of the unstable submanifolds $U_r$ and $U_q$.  Hence, we will write below our choice for the orientation for the relevant unstable submanifolds explicitly.  (The orientation of the stable submanifolds of a critical point are then fixed by \eqref{SUMo}.)  Similar to \cite{AB}*{Section 2.2},
we will express the orientations in terms of orthonormal frame vectors grouped together by Clifford multiplication.

Let $e_1, \ldots, e_k$ be an orthonormal set of frame vectors that are shared by both $U_{q_k}$ and $U_{r_{k+\ell+1}}$.   Let $e_{k+1}, \dots, e_{k+\ell+1}$ be the additional frame vectors in $U_{r_{k+\ell+1}}$ defined such that they point in the direction away from $q_k$ towards $r_{k+l+1}$, i.e. in the direction of $\nabla f$. Then, for $p_{k+1}$, there is a vector $e_{i_{p_{k+1}}}$ that points along the gradient curve $\CM(p_{k+1}, q_k)$ from $q_k$ to $p_{k+1}$, and for $p_{k+\ell}$, there is a vector $e_{i_{p_{k+\ell}}}$ that points along the gradient curve $\CM(r_{k+l+1}, p_{k+l})$ from $p_{k+\ell}$ to $r_{k+\ell+1}$.  Note both $e_{i_{p_{k+1}}}$ and $e_{i_{p_{k+\ell}}}$ are defined to point in the direction of $\nabla f$. See Figure \ref{Afigure} below.
\begin{figure}[h]
\centering
\includegraphics[scale=.6]{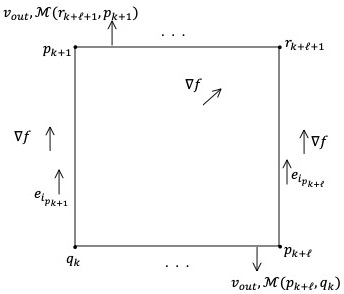} \\
\caption{$\CM(r_{k+\ell+1}, q_k)$ with orientations.}
\label{Afigure}
\end{figure}

Our choice for the orientation of the relevant unstable submanifolds are
\begin{align*}
&[U_{q_k}]=e_k\ldots e_1\,, \qquad\qquad\qquad  &&[U_{p_{k+\ell}}]=e_{k+\ell+1}\ldots \widehat{e_{i_{p_{k+\ell}}}}\ldots e_k\ldots e_{1}\,,\\ 
&[U_{p_{k+1}}]=e_{i_{p_{k+1}}}e_k\ldots e_1\,, \quad &&[U_{r_{k+\ell+1}}]=e_{k+\ell+1}\ldots e_k\ldots e_1\,.
\end{align*} 
 
Then by \eqref{Mrq}, $[\CM(r,q)]=[U_r][U_q]^{-1}$, we find the orientations of the moduli spaces: 
\begin{align}
[\CM(r_{k+\ell+1},q_k)]
&=(e_{k+\ell+1}\ldots e_k\ldots e_{1})(e_1\ldots e_k) =e_{k+\ell+1}\ldots e_{k+1}\,,\label{MrqO}\\
[\CM(p_{k+\ell}, q_k)]
&=(e_{k+\ell+1}\ldots \widehat{e_{i_{p_{k+\ell}}}}\ldots e_k\ldots e_{1})(e_1\ldots e_k)=e_{k+\ell+1}\ldots \widehat{e_{i_{p_{k+\ell}}}}\ldots e_{k+1}\,,\nonumber
\\
[\CM(r_{k+\ell+1}, p_{k+1})]
&=(e_{k+\ell+1}\ldots e_k\ldots e_{1})(e_1\ldots e_k e_{i_{p_{k+1}}})\nonumber\\
&=(-1)^{i_{p_{k+1}}-k-1}\,e_{k+\ell+1}\ldots \widehat{e_{i_{p_{k+1}}}}\ldots e_{k+1}\,.\nonumber
\end{align}
And by \eqref{tMrq}, we also have
\begin{align*}
[\widetilde{\CM}(r_{k+\ell+1}, p_{k+\ell})]&=[\CM(r_{k+\ell+1}, p_{k+\ell})][\nabla f]^{-1}\\
&=(e_{k+\ell+1}\ldots e_k \ldots e_1)(e_1\ldots e_{k}\ldots \widehat{e_{i_{p_{k+\ell}}}}\ldots e_{k+\ell+1})(e_{i_{p_{k+\ell}}})\\
&=(-1)^{k+\ell+1-i_{p_{k+\ell}}}\,,\\
[\widetilde{\CM}(p_{k+1}, q_k)]&=[\CM(p_{k+1}, q_k)][\nabla f]^{-1}\\
&=(e_{i_{p_{k+1}}}e_k\ldots e_1)(e_1\ldots e_{k})(e_{i_{p_{k+1}}})=1\,.
\end{align*} 
Hence, we find
\begin{align}
[\CM(p_{k+\ell}, q_k) \times \widetilde{\CM}(r_{k+\ell+1},p_{k+\ell})]&=(-1)^{k+\ell+1-i_{p_{k+\ell}}}\,e_{k+\ell+1}\ldots \widehat{e_{i_{p_{k+\ell}}}}\ldots e_{k+1}\,,\label{MM1}\\
[\CM(r_{k+\ell+1}, p_{k+1}) \times \widetilde{\CM}(p_{k+1},q_k)]&=(-1)^{i_{p_{k+1}}-k-1}\,e_{k+\ell+1}\ldots \widehat{e_{i_{p_{k+1}}}}\ldots e_{k+1}\,.\label{MM2}
\end{align}

\textit{Step 2: Orientation of the boundary components, $\CM(p_{k+\ell}, q_k) \times \widetilde{\CM}(r_{k+\ell+1},p_{k+\ell})$ and $\CM(r_{k+\ell+1}, p_{k+1}) \times \widetilde{\CM}(p_{k+1},q_k)$, as specified by Stokes' theorem.}

For a manifold $N$ with boundary $\del N$, Stokes' theorem holds only if the 
orientation of the boundary $\del N$ is chosen such that 
\begin{align}
[v_{out}][\del N] = [N]
\end{align}
where $v_{out}$ is the outward pointing normal on the boundary. 

For the boundary component $\CM(p_{k+\ell}, q_k) \times \widetilde{\CM}(r_{k+\ell+1},p_{k+\ell})$, the outward pointing normal at for instance $p_{k+\ell}$ can be expressed as (see Figure \ref{Afigure}) 
\[v_{out,\CM( p_{k+\ell},q_k)}=-e_{i_{p_{k+\ell}}}+ \sum_{k+j\neq i_{p_{k+\ell}}} a_je_{k+j}\,.\]
Therefore, the specified orientation from Stokes' theorem (denoted with a subscript `$S$') is
\begin{align}
[\CM(p_{k+\ell}, q_k) \times \widetilde{\CM}(r_{k+\ell+1},p_{k+\ell})]_S &= [v_{out, \CM( p_{k+\ell},q_k)}]^{-1}[\CM(r_{k+\ell+1}, q_k)]\nonumber\\
&=(-e_{i_{p_{k+\ell}}})(e_{k+\ell+1}\ldots e_{k+1})\nonumber\\
&=(-1)^{k+\ell+i_{p_{k+\ell}}}\,e_{k+\ell+1}\ldots \widehat{e_{i_{p_{k+\ell}}}}\ldots e_{k+1}\nonumber\\
&=-[\CM(p_{k+\ell}, q_k) \times \widetilde{\CM}(r_{k+\ell+1},p_{k+\ell})]\label{Mpkls}
\end{align}
having used \eqref{MrqO} in the first line and \eqref{MM1} in the last line.  

Similarly, for the boundary component $\CM(r_{k+\ell+1}, p_{k+1}) \times \widetilde{\CM}(p_{k+1},q_k)$, the outward pointing normal at for instance $p_{k+1}$ can be expressed as (see Figure \ref{Afigure})
\[v_{out, \CM( r_{k+\ell+1},p_{k+1})}=e_{i_{p_{k+1}}} + \sum_{ k+j \neq i_{p_{k+1}}}a_je_{k+j}\,.\]
This gives for the specified orientation from Stokes' theorem
\begin{align}
[\CM(r_{k+\ell+1}, p_{k+1}) \times \widetilde{\CM}(p_{k+1},q_k)]_S&=[v_{out, \CM( r_{k+\ell+1},p_{k+1})}]^{-1}[\CM(r_{k+\ell+1}, q_k)]\nonumber\\
&=( e_{i_{p_{k+1}}})(e_{k+\ell+1}\ldots e_{k+1})\nonumber\\
&=(-1)^{k+\ell+1-i_{p_{k+1}}}\,e_{k+\ell+1}\ldots \widehat{e_{i_{p_{k+1}}}}\ldots e_{k+1}\nonumber\\
&=(-1)^\ell [\CM(r_{k+\ell+1}, p_{k+1}) \times \widetilde{\CM}(p_{k+1},q_k)] \label{Mrkls}
\end{align}
having used \eqref{MrqO} in the first line and \eqref{MM2} in the last line.

\

Finally, with \eqref{Mpkls}-\eqref{Mrkls} and matching up with the corresponding terms in \eqref{SemiStokes}, we have 
\begin{align*}  
c(d\psi)q_k&=\sum_{r_{k+\ell+1}}\Bigg[-\sum_{p_{k+\ell}}\left(\int_{\overline{\CM(p_{k+\ell}, q_k)}}\psi\right)n(r_{k+\ell+1},p_{k+\ell}) \\
&\qquad\qquad\qquad+\sum_{p_{k+1} }(-1)^{\ell}n(p_{k+1},q_k)\left(\int_{\overline{\CM(r_{k+\ell+1}, p_{k+1})}}\psi\right)\Bigg]r_{k+\ell+1} \\ 
    &=-\del c(\psi)q_k+(-1)^{\ell}c(\psi)\del q_k
\end{align*}
or equivalently, $-c(d\psi)=\del c(\psi)+(-1)^{\ell+1}c(\psi)\del$.

\section{Cochain complexes of a chain map and their relations}\label{AppB}

We here review some relations between cochain complexes that arise from a chain map. For a reference, see \cite{Weibel}.

Let $\vp: (B, \db) \to (A, \da)$ be a degree $\ell$ chain map between two cochain complexes, 
\begin{equation}
\begin{tikzcd}
\ldots \arrow[r, "\da"]  & A^{n-1} \arrow[r,"\da"] & A^n \arrow[r,"\da"]  & A^{n+1}\arrow[r, "\da"] & \ldots\\
\ldots \arrow[r, "\db"] & B^{n-\ell-1} \arrow[r, "\db"] \arrow[u,swap,"\,\vp_{}"] & B^{n-\ell} \arrow[r, "\db"]\arrow[u,swap,"\,\vp_{}"]  & B^{n-\ell+1} \arrow[r, "\db"]\arrow[u, swap, "\,\vp_{}"]& \dots
\end{tikzcd}
\end{equation}
that is, the map $\vp:  B^n \to A^{n+\ell}$ satisfy the chain map condition 
\begin{align}\label{vpch}
\vp\, \db = \da\, \vp \,. 
\end{align}

Associated to such a map are the following cochain complexes.
\begin{itemize}
\item[(1)] The \textit{kernel} complex $(\ker \vp, \db)$. This is a subcomplex of $(B, \db)$    where $\ker^n\!\vp = \ker \vp \;\cap \,B^n$.  To see that $\db: \ker^n\!\vp \to \ker^{n+1}\!\vp\,$, consider an element $b_n \in \ker^n\!\vp$, i.e. $\vp\, b_n =0$.  By \eqref{vpch}, we have  $\vp (\db b_n) = \da (\vp\, b_n) = 0$; hence,  $\db b_n \in \ker^{n+1}\!\vp\,$. 
\item[(2)] The \textit{image} complex $(\im \vp, \da)$.  This is a subcomplex of $(A, \da)$ where $\im^n\!\vp = \im\vp \; \cap \, A^n$.  Specifically, if $a_n \in \im^n\!\vp$, then there exists an $b_{n-\ell}\in B^{n-\ell}$ such that $a_n = \vp\, b_{n-\ell}$.  Again, it follows directly from \eqref{vpch} that $\da: \im^n\!\vp \to \im^{n+1}\!\vp$.   
\item[(3)] The \textit{cokernel} complex $(\coker \vp, \dap)$.  This is also a subcomplex of $(A,\da)$ where $\coker^n\!\vp = A^n / \im \vp_{}$ and $\dap = \pi \,\da$ is the composition of $\da$ with the quotient map $\pi: A^n \to \coker^n\!\vp$.   To denote elements of the cokernel complex, we shall use a bracket, i.e. $[a_n]\!:=\{ a_n + \vp\, b_{n-\ell}\, |\, b_{n-\ell}\in B^{n-\ell}\} \in \coker^n\!\vp$. Note that $\dap[a_n]=[\da a_n]$, and therefore, $\dap\,\dap=0$.
\item[(4)] The (mapping) \textit{cone} complex $(\tC(\vp), \dc)$, the main focus of this paper, involves both $(B, \db)$ and $(A, \da)$.  Here,
\begin{align*}
\tC^n(\vp) = A^n \oplus B^{n-\ell+1} \, , \qquad \dc=\begin{pmatrix} \da & \vp \\ 0 & - \db\end{pmatrix}\,.
\end{align*}
with $\dc: \tC^n(\vp) \to \tC^{n+1}(\vp)$. Note that the chain map relation \eqref{vpch} ensures that $\dc\, \dc =0\,$.  \\
\end{itemize}
Each of the above cochain complexes results in a cohomology, denoted by $H^n(\ker \vp)$, $H^n(\im \vp)$, $H^n(\coker \vp)$, and $H^n(\tC(\vp))$, respectively.   We are interested in the relations amongst these cohomologies and also their relations with $H^n(A)$ and $H^n(B)$.  A first basic relation used throughout the paper follows from the following short exact sequence of cochain complexes 
\begin{equation*}
\begin{tikzcd}
0 \arrow[r]  & (A^n, d_A) \arrow[r,"\iota_A"]  & (\tC^n(\vp), d_C)  \arrow[r,"\pi_B"]  & (B^{n-\ell+1}, -d_B) \arrow[r] & 0
\end{tikzcd}
\end{equation*}
where the chain map $\iota_A$ is the inclusion into the first component of $\tC(\vp)$ and $\pi_B$ is the projection of the second component.  The short exact sequence gives the long exact sequence 
\begin{equation}
\begin{tikzcd} 
\cdots\ H^{n-\ell}(B) \arrow[r, "\vp"] & H^{n}(A) \arrow[r, 
"\iota_A"] & H^{n}(\tC(\vp)) \arrow[r, "\pi_B"] & H^{n-\ell+1}(B) \arrow[r, "\vp"] & H^{n+1}(A)\ \cdots
\end{tikzcd}
\end{equation}
which implies the following:
\begin{lem}\label{hckkba}
Given a degree $\ell$ chain map $\vp: (B, \db) \to (A, \da)$ between two cochain complexes, the resulting cone cohomology has the following relation: 
\begin{align*}
H^{n}(\tC(\vp))\cong \coker(\,\vp\!: H^{n-\ell}(B)\to H^n(A)\,)\,\oplus\,\, \ker(\,\vp\!: H^{n-\ell+1}(B)\to H^{n+1}(A)\,)\,.
\end{align*} 
\end{lem}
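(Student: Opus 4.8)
The plan is to derive Lemma \ref{hckkba} directly from the long exact sequence exhibited just before the statement, which itself comes from the short exact sequence of cochain complexes $0 \to (A^n, \da) \to (\tC^n(\vp), \dc) \to (B^{n-\ell+1}, -\db) \to 0$. Since that long exact sequence is already in hand, the proof reduces to a purely homological splitting argument applied to the four-term exact segment
\begin{equation*}
\begin{tikzcd}
H^{n-\ell}(B) \arrow[r, "\vp"] & H^{n}(A) \arrow[r, "\iota_A"] & H^{n}(\tC(\vp)) \arrow[r, "\pi_B"] & H^{n-\ell+1}(B) \arrow[r, "\vp"] & H^{n+1}(A)\,.
\end{tikzcd}
\end{equation*}
First I would extract the relevant short exact sequence internal to the long one. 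Exactness at $H^n(\tC(\vp))$ tells us $\ker \pi_B = \im \iota_A$, and exactness at $H^n(A)$ tells us $\ker \iota_A = \im(\vp: H^{n-\ell}(B) \to H^n(A))$, so that $\im \iota_A \cong H^n(A)/\ker\iota_A = \coker(\vp: H^{n-\ell}(B) \to H^n(A))$. This identifies the subspace $\im\iota_A \subset H^n(\tC(\vp))$ with the cokernel summand.

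Next I would identify the quotient. Exactness at $H^{n-\ell+1}(B)$ gives $\im \pi_B = \ker(\vp: H^{n-\ell+1}(B) \to H^{n+1}(A))$, so $\pi_B$ maps $H^n(\tC(\vp))$ onto $\ker(\vp: H^{n-\ell+1}(B) \to H^{n+1}(A))$ with kernel $\im\iota_A$. Assembling these two facts yields the short exact sequence
\begin{equation*}
\begin{tikzcd}
0 \arrow[r] & \coker(\vp: H^{n-\ell}(B) \to H^n(A)) \arrow[r] & H^n(\tC(\vp)) \arrow[r] & \ker(\vp: H^{n-\ell+1}(B) \to H^{n+1}(A)) \arrow[r] & 0\,.
\end{tikzcd}
\end{equation*}
Because every complex here is over the field $\mathbb{R}$ (the cohomologies are $\mathbb{R}$-vector spaces), this short exact sequence splits, and the direct sum decomposition asserted in the lemma follows immediately.

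The only genuine subtlety, and what I expect to be the main point requiring care rather than a serious obstacle, is the splitting step: the isomorphism $H^n(\tC(\vp)) \cong \coker \oplus \ker$ is a non-canonical vector space isomorphism, valid precisely because we work over a field so that every short exact sequence of vector spaces splits. I would state this explicitly to avoid suggesting a natural splitting. Everything else is a mechanical reading of exactness at three consecutive spots of the long exact sequence, so no delicate sign or orientation computation enters here — those were already absorbed into establishing that the long exact sequence exists. I would therefore keep the write-up short, citing the long exact sequence as the input and performing only the kernel/cokernel bookkeeping.
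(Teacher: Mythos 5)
Your proposal is correct and follows essentially the same route as the paper, which simply asserts that the long exact sequence ``implies'' the lemma; your write-up fills in the routine kernel/cokernel bookkeeping and makes explicit the (non-canonical) splitting over the field $\mathbb{R}$ that the paper leaves implicit.
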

To relate the other cohomologies, it is useful to introduce another cone complex defined by the inclusion map $\iota: \im^n\!\vp \to A^n$, which is a degree $\ell=0$ map.  We shall denote this cone complex with a tilde:
\begin{align*}
\ttC^n\!(\iota) = A^n \oplus \im^{n+1}\!\vp\, , \qquad \dcc=\begin{pmatrix} \da & \iota \\ 0 & - \da\end{pmatrix}\,.
\end{align*}
Of note, the cohomology of this complex, $H^n(\ttC(\iota))$ is isomorphic to $H^n(\coker\vp)$.

\begin{lem}\label{cokernelH}
The map $\pio: \ttC^n\!(\iota) \rightarrow \coker^n\!\vp$ given by $\pio\begin{pmatrix} a \\ \tila \end{pmatrix}
= \pi \, a$, where $\pi: A^n\to \coker^n\!\vp$,
induces an isomorphism on cohomology: $H^n(\ttC(\iota))\cong H^n(\coker\vp)\,$.
\end{lem}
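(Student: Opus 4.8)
The plan is to realize $\pio$ as the projection onto the quotient in a short exact sequence of cochain complexes whose kernel is contractible, so that the associated long exact sequence collapses and forces $\pio$ to be a quasi-isomorphism. First I would check that $\pio$ is a chain map. For $\begin{pmatrix} a \\ \tila \end{pmatrix}\in\ttC^n(\iota)$ one computes $\pio\,\dcc\begin{pmatrix} a \\ \tila \end{pmatrix}=\pi(\da a+\tila)=[\da a]$, using that $\tila\in\im^{n+1}\!\vp$ represents $0$ in $\coker\vp$, and this equals $\dap\,\pio\begin{pmatrix} a \\ \tila \end{pmatrix}=\dap[a]=[\da a]$. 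Since $\pio$ is visibly surjective in each degree (take $\tila=0$), its kernel is the subcomplex $K$ with $K^n=\im^n\!\vp\oplus\im^{n+1}\!\vp$, giving the short exact sequence $0\to K\to\ttC(\iota)\xrightarrow{\pio}\coker\vp\to 0$.

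The key step is to identify $K$. Because $\im\vp$ is a $\da$-stable subcomplex of $(A,\da)$ and the off-diagonal entry $\iota$ of $\dcc$ acts as the identity once its argument lies in $\im\vp$, the restriction of $\dcc$ to $K$ is $\begin{pmatrix} \da & \mathrm{id} \\ 0 & -\da \end{pmatrix}$ on $\im^\bullet\!\vp\oplus\im^{\bullet+1}\!\vp$; that is, $K$ is exactly the mapping cone of the identity map of $(\im\vp,\da)$. Such a cone is contractible: the degree $-1$ operator $h\begin{pmatrix} a \\ \tila \end{pmatrix}=\begin{pmatrix} 0 \\ a \end{pmatrix}$ satisfies $\dcc\,h+h\,\dcc=\mathrm{id}$ on $K$ by a one-line matrix computation, so $H^n(K)=0$ for all $n$.

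Finally I would feed the acyclicity of $K$ into the long exact cohomology sequence of $0\to K\to\ttC(\iota)\to\coker\vp\to 0$: the terms $H^n(K)$ and $H^{n+1}(K)$ flanking $(\pio)_*\colon H^n(\ttC(\iota))\to H^n(\coker\vp)$ both vanish, so $(\pio)_*$ is both injective and surjective, hence an isomorphism, which is the claim. The only delicate point is the identification of $K$ with the cone of the identity: one must confirm that $\dcc$ genuinely restricts to $K$ (this uses $\da\,\im\vp\subseteq\im\vp$) and that $\iota|_{\im\vp}$ is literally the identity map, after which the contractibility and the conclusion are immediate. The chain-map verification and the homotopy check are then routine. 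Alternatively, one could apply Lemma \ref{hckkba} to the degree-zero inclusion $\iota$ together with the long exact sequence of $0\to\im\vp\to A\to\coker\vp\to 0$, but the contractible-kernel argument is more direct.
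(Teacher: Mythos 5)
Your proof is correct, but it takes a genuinely different route from the paper's. The paper verifies directly that $[\pio]$ is bijective on cohomology: given a $\dap$-closed class $[a]$, it produces an explicit $\dcc$-closed preimage $\begin{pmatrix} a \\ \vp b\end{pmatrix}$ with $\da a + \vp b = 0$ (surjectivity), and it checks that exact classes in $\coker\vp$ lift to $\dcc$-exact elements (injectivity). You instead package $\pio$ as the quotient map in a short exact sequence of complexes $0\to K\to \ttC(\iota)\xrightarrow{\pio}\coker\vp\to 0$, identify the kernel $K^n=\im^n\!\vp\oplus\im^{n+1}\!\vp$ as the mapping cone of the identity on $(\im\vp,\da)$, and kill it with the explicit contraction $h\begin{pmatrix} a\\ \tila\end{pmatrix}=\begin{pmatrix}0\\ a\end{pmatrix}$, which indeed satisfies $\dcc h+h\dcc=\mathrm{id}$ on $K$; the long exact sequence then forces $[\pio]$ to be an isomorphism. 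Your identification of $K$ is sound: $\da$-stability of $\im\vp$ ensures $\dcc$ restricts to $K$, and $\iota$ is literally the identity there. Your argument is more structural and dovetails with the machinery the paper already sets up in this appendix (it is essentially Lemma \ref{hckkba} applied to $\iota$, as you note in your closing remark), while the paper's element-level chase is more elementary and makes the inverse map explicit; both are complete proofs.
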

\begin{proof}
That the $\pio$ map is a chain map follows straightforwardly from the definition.   To prove the isomorphism, we will show that $\pio: H^n(\ttC(\iota))\rightarrow H^n(\coker\vp)$ is bijective. 
 
Let $[a] \in \coker^n\!\vp$.  
To show surjectivity, assume $[a]\in H^n(\coker\vp)$, that is, $[a]$ is closed under $\dap=\pi\,\da$, or equivalently, that the representative $a\in A^n$ satisfies 
\begin{align}\label{surjeq}
\da a + \vp\, b = 0
\end{align}
for some $b\in B^{n-\ell+1}$.  Now let $\tila = \vp\, b$.  Then since $\da \da =0\,$,  \eqref{surjeq} implies $\da \tila =0$.  Therefore, the pair $\begin{pmatrix} a \\ \tila \end{pmatrix}$ is $\dcc$-closed, i.e.~it is an element of $H^n(\ttC(\iota))$, and moreover, $\pio: \begin{pmatrix} a \\ \tila \end{pmatrix} \rightarrow [a]$ as desired.  

To show that $\pio$ is also injective, let now $[a] = \pi \da [a']$ representing the trivial class in $H^n(\coker\vp)$.  This implies that $a = \da a' + \vp\, b'$ for some $b' \in B^{n-\ell}$.  But this also means, 
\begin{align*}
\pio\, \left\{ \dcc\begin{pmatrix} a' \\ \vp\,b' \end{pmatrix}\right\} = \pio \begin{pmatrix} \da a' + \vp \, b' \\ - \da\, \vp\,b' \end{pmatrix}  = \pi (\da a' + \vp\,b') = [a]\,.
\end{align*}
Hence, $\pio$ maps trivial class to trivial class.
\end{proof}
Now applying Lemma \ref{hckkba} to $H^n(\ttC(\iota))$ and using Lemma \ref{cokernelH}, we find the following:
\begin{lem} 
For the cohomology of the cokernel complex, we have
\begin{align*}
H^n(\coker\vp)\cong \coker(\,\iota\!: H^{n}(\im\vp)\to H^n(A)\,)\,\oplus\,\, \ker(\,\iota\!: H^{n+1}(\im\vp)\to H^{n+1}(A)\,)\,,
\end{align*}
where $\iota: \im^n\!\vp \to A^n$ is the inclusion map.
\end{lem}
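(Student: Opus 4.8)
The plan is to recognize the desired decomposition as a direct instance of Lemma~\ref{hckkba}, applied not to the original map $\vp$ but to the inclusion $\iota\colon \im\vp \to A$. First I would observe that $\iota$ is a \emph{degree zero} chain map between the cochain complexes $(\im\vp, \da)$ and $(A, \da)$: indeed $\im^n\!\vp$ is a subcomplex of $A^n$, and $\da$ commutes with the inclusion by the very definition of the image complex. This is precisely the hypothesis of Lemma~\ref{hckkba}, with the source complex $B$ replaced by $\im\vp$, the map $\vp$ replaced by $\iota$, the differential $\db$ replaced by $\da$, and the degree $\ell$ set equal to $0$.

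With this identification in hand, the two steps are mechanical. Applying Lemma~\ref{hckkba} to the cone complex $\ttC(\iota)$ gives
\[
H^n(\ttC(\iota)) \cong \coker\bigl(\iota\colon H^{n}(\im\vp) \to H^n(A)\bigr) \oplus \ker\bigl(\iota\colon H^{n+1}(\im\vp) \to H^{n+1}(A)\bigr),
\]
where the index shifts of Lemma~\ref{hckkba} collapse to $n$ and $n+1$ precisely because $\ell = 0$. It then remains to invoke Lemma~\ref{cokernelH}, which supplies the isomorphism $H^n(\ttC(\iota)) \cong H^n(\coker\vp)$. Composing the two isomorphisms yields the claimed formula for $H^n(\coker\vp)$, completing the argument.

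Since both ingredients are already established, there is no genuine obstacle; the proof is a two-line composition. The only point deserving a moment's care is the bookkeeping identification: one must confirm that $\ttC(\iota)$, as defined just before the statement, is literally the cone complex $\tC(\iota)$ appearing in Lemma~\ref{hckkba} for the degree-zero map $\iota$. Comparing the two differentials settles this, including the $-\da$ in the lower-right entry of $\dcc$, which is exactly the $-\db$ of the general cone differential once $\db = \da$ and $\ell = 0$. Once this matching is noted, no further computation is required.
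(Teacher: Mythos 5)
Your proposal is correct and is precisely the paper's own argument: the paper likewise obtains the lemma by applying Lemma~\ref{hckkba} to the degree-zero inclusion $\iota\colon \im\vp\to A$ (whose cone is $\ttC(\iota)$) and then identifying $H^n(\ttC(\iota))$ with $H^n(\coker\vp)$ via Lemma~\ref{cokernelH}. Your extra remark checking that $\ttC(\iota)$ with its differential $\dcc$ matches the general cone construction for $\ell=0$ is a sound piece of bookkeeping that the paper leaves implicit.
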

Finally, we give a relation that links $H(\tC(\vp))$ with $H(\ker \vp)$ and $H(\coker \vp)$.  At the cochain level, we can write down the following short exact sequence of cochain complexes: 
\begin{equation}\label{sestc}
\begin{tikzcd}
0 \arrow[r]  &  (\ker^{n-\ell+1}\!\vp, \db)  \arrow[r,"\itwo"] & (\tC^n(\vp), \dc) \arrow[r,"\vpt"]  & (\ttC^n\!(\iota), \dcc)\arrow[r, ] & 0
\end{tikzcd}
\end{equation}
where the maps $\itwo$ and $\vpt$ are defined by
\begin{alignat*}{2}
\itwo:  \ker^{n-\ell+1} \!\vp &\longrightarrow  \tC^n(\vp).  \qquad \qquad &\vpt: \tC^n(\vp) & \longrightarrow \ttC^n\!(\iota)\\
 b \qquad &\longmapsto  \quad\begin{pmatrix} 0 \\ b \end{pmatrix}  & \begin{pmatrix} a \\ b \end{pmatrix} \quad & \longmapsto ~ \begin{pmatrix} a \\ \vp \, b \end{pmatrix} 
\end{alignat*}
The short exact sequence \eqref{sestc} implies the following long exact sequence of cohomology:
\begin{align*}
\ldots \xrightarrow{\delta} H^{n-\ell+1}(\ker\vp) \xrightarrow[]{\iota_2} H^{n}(\tC(\vp))\xrightarrow[]{\vpt} H^{n}(\ttC(\iota)) \xrightarrow[]{\delta} H^{n-\ell+2}(\ker\vp) \xrightarrow[]{\iota_2} \ldots 
\end{align*}
where the connecting homomorphism $\delta$ can be obtained by standard diagram chasing.  Now using Lemma \ref{cokernelH} to replace $H^{n}(\ttC(\iota))$ by $H^{n}(\coker\vp)$, we have derived the below long exact sequence.
\begin{lem}
Let $\vp: (B, \db) \to (A, \da)$ be a degree $\ell$ chain map between cochain complexes.  Then there exists a connecting homomorphism $\delta'$ such that
\begin{align}\label{leskca}
\dots 
\xrightarrow[]{\delta'} H^{n-\ell+1}(\ker\vp) \xrightarrow[]{\iota_2} H^{n}(\tC(\vp))\xrightarrow[]{\pio\,\circ\,\vpt} H^{n}(\coker\vp) \xrightarrow[]{\delta'} H^{n-\ell+2}(\ker\vp) \xrightarrow[]{\iota_2}
\dots 
\end{align} 
is a long exact sequence.
\end{lem}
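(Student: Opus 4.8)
The plan is to read off the long exact sequence \eqref{leskca} from the short exact sequence of cochain complexes \eqref{sestc} by the standard homological machinery, and then to transport it along the isomorphism of Lemma \ref{cokernelH}. Concretely, I would first confirm that \eqref{sestc} really is a short exact sequence of complexes, then invoke the general theorem that a short exact sequence of cochain complexes induces a long exact sequence in cohomology with a connecting homomorphism $\delta$, and finally substitute $H^n(\ttC(\iota))$ by $H^n(\coker\vp)$ using $\pio$, renaming the resulting connecting map $\delta'$.

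For the first step, I would verify the three ingredients of exactness at the cochain level. That $\itwo$ is injective is immediate from $\itwo(b)=\begin{pmatrix} 0 \\ b\end{pmatrix}$, and that $\vpt$ is surjective follows because any $\begin{pmatrix} a \\ \tila\end{pmatrix}\in\ttC^n(\iota)$ has $\tila\in\im^{n+1}\vp$, so $\tila=\vp b$ for some $b$ and $\vpt\begin{pmatrix} a \\ b\end{pmatrix}=\begin{pmatrix} a \\ \tila\end{pmatrix}$. For exactness in the middle I would compute that $\vpt\begin{pmatrix} a \\ b\end{pmatrix}=\begin{pmatrix} a \\ \vp b\end{pmatrix}=0$ iff $a=0$ and $b\in\ker\vp$, so that $\ker\vpt=\{(0,b):b\in\ker\vp\}=\im\itwo$. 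I would also check that $\itwo$ and $\vpt$ are chain maps: for $\vpt$ this uses the chain-map property $\vp\,\db=\da\,\vp$ of \eqref{vpch} together with the definition of $\dcc$, giving $\dcc\,\vpt=\vpt\,\dc$ directly; for $\itwo$ one finds $\dc\,\itwo(b)=\begin{pmatrix} \vp b \\ -\db b\end{pmatrix}=\begin{pmatrix} 0 \\ -\db b\end{pmatrix}$ on $\ker\vp$, compatible with the differential on the kernel subcomplex up to the overall sign, which is irrelevant for cohomology.

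With \eqref{sestc} established, the standard long exact sequence applies verbatim, producing the sequence with $H^n(\ttC(\iota))$ in the middle spot, $\vpt$ as the map out of $H^n(\tC(\vp))$, and a snake-lemma connecting map $\delta$. The only bookkeeping to keep straight is the degree shift: since the $B$-slot of $\tC^n(\vp)$ sits in internal degree $n-\ell+1$, the leftmost term in cohomological degree $n$ is $H^{n-\ell+1}(\ker\vp)$ and the connecting map, raising the cone degree by one, lands in $H^{n-\ell+2}(\ker\vp)$. Finally I would invoke Lemma \ref{cokernelH}, which states that $\pio$ induces an isomorphism $H^n(\ttC(\iota))\cong H^n(\coker\vp)$, to replace each $H^n(\ttC(\iota))$ by $H^n(\coker\vp)$; the map out of $H^n(\tC(\vp))$ then becomes $\pio\circ\vpt$ and the connecting map is defined as $\delta'=\delta\circ\pio^{-1}$. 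Exactness of the new sequence is inherited from that of the old one precisely because $\pio$ is an isomorphism.

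The main obstacle is not conceptual but organizational: keeping the sign conventions in $\dc$ and $\dcc$ (the $-\db$ and $-\da$ entries) consistent with the differential on the kernel subcomplex, and tracking the $\ell$-shift so that the indices in \eqref{leskca} come out correctly. No step requires more than the snake-lemma construction of $\delta$ and the already-established isomorphism of Lemma \ref{cokernelH}.
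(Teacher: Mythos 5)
Your proposal is correct and follows the paper's own argument essentially verbatim: verify that \eqref{sestc} is a short exact sequence of cochain complexes, invoke the standard long exact sequence in cohomology with its connecting homomorphism, and then use Lemma \ref{cokernelH} to replace $H^{n}(\ttC(\iota))$ by $H^{n}(\coker\vp)$, so that the outgoing map becomes $\pio\circ\vpt$. Your additional care with the sign on the kernel subcomplex differential and with the $\ell$-shift in the indices is a harmless elaboration of what the paper dispatches as standard diagram chasing.
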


\section{Differential graded algebra for mapping cones of differential forms}\label{AppC}

We can give a differential graded algebra structure for the mapping cone of the map $\psi\w$ where $\psi\in \Om^\ell(M)$ is a $d$-closed form.  Our description will differ slightly depending on whether $\ell$ is even or odd. 
  
Consider first the case when $\ell$ is even.  A simple way to motivate the algebra structure on $\tC^\bullet(\psi)=\Om^\bullet(M) \oplus \Om^{\bullet-\ell+1}$ is to introduce a formal object, $\theta$, that acts like a differential $(\ell-1)$-form with two defining properties: (i) $d\theta = \psi$; (ii) $\theta \w \theta = 0$.  Making use of $\theta$, we can express $\tC^k(\psi)= \Om^k(M) \oplus \theta\w \Om^{k-\ell+1}(M)$ which now has the same total degree grading on both components.  Moreover, the cone differential $d_C$ can be interpreted simply as an exterior derivative: 
\begin{align}\label{C1}
d (\Om^k \oplus \theta\w \Om^{k-\ell+1}) = (d \Om^k + \psi \w \Om^{k-\ell+1}) \oplus \theta \w (-d \Om^{k-\ell+1}) = d_C \tC(\psi)\,.
\end{align} 
We can thus treat $\tC^k(\psi)$ formally as a differential form space and define the product operation on cone forms by means of the standard wedge product:
\begin{align}
\tC^j(\psi) \times \tC^k(\psi) :&= (\Om^j  \oplus \theta\w \Om^{j+\ell-1}) \w (\Om^k \oplus \theta \w \Om^{k+\ell-1})\nonumber\\
& = (\Om^j \w \Om^k) \oplus \theta\w \left[\Om^{j+\ell-1}\w \Om^k+(-1)^{j}\Om^j\w \Om^{k-\ell+1}\right]. \label{C2}
\end{align} 
Clearly, this product satisfies the Leibniz rule, and hence, $(\tC^\bullet(\psi), d_C, \times)$ is a differential graded algebra that is dependent on $\psi$.

Now, we consider the case when $\ell$ is odd. The above product formula (\ref{C2}) does not generalize directly as $\theta$ is now formally an even degree form.  Requiring $\theta\w\theta=0$ together with $d\theta=\psi$ would imply an additional consistency constraint of $\frac{1}{2}d(\theta\w\theta)= \theta\w \psi=0,$ which is unnatural if imposed on the second component $\theta\w \Om^{k-\ell+1}$.  Instead, we can build in this additional constraint by considering a modified cone space.  Of note, since $\psi$ is a form of odd degree, we have the property $\psi \w \psi = 0\,$.
It is therefore natural to consider the quotient $\pi: \Om^k \to \Om^k/\im^k \psi = \coker^k \psi\,$ and its associated chain complex $(\coker^\bullet\psi, d^\pi)$ where  $d^\pi=\pi\,d$ (see Appendix \ref{AppB}) as it gives a more refined map $\psi \w : \coker^k\psi \to \Om^{k+\ell}(M)$.  This map results in the following cone complex:
\begin{align}\label{C3}
\wtC^k\!(\psi) = \Om^k \oplus \coker^{k-\ell+1}\psi\, , \qquad \dcw=\begin{pmatrix} d & \psi\w \\ 0 &  d^\pi\end{pmatrix}\,.
\end{align}
Importantly, we can write $\wtC^k\!(\psi)= \Om^k \oplus \theta \w \pi(\Om^{k-\ell+1})$, where $\theta$ is again the formal $(\ell-1)$-degree form with properties $d\theta = \psi$ and $\theta \w \theta = 0$.  With the presence of the quotient operator $\pi$ on the right side of $\theta$, the formerly concerning condition $\theta \w \psi\w \Om=0$ becomes now $\theta \w \pi(\psi\w\Om )=0$ which holds trivially.  Moreover, the modified cone differential $\dcw$ continues to act as an exterior derivative,
\begin{align}\label{C4}
d \left(\Om^k \oplus \theta\w\pi (\Om^{k-\ell+1})\right) = (d \Om^k + \psi \w \Om^{k-\ell+1}) \oplus \theta \w\pi(d \Om^{k-\ell+1}) = \dcw \wtC^k(\psi)\,.
\end{align} 
Thus, $\wtC^k(\psi)$ can still be treated as a single differential form space and we define the product operation on $\wtC^\bullet(\psi)$ similar to \eqref{C2}:
\begin{align}\label{C5}
\wtC^j(\psi) \times \wtC^k(\psi) :
& = (\Om^j \w \Om^k) \oplus \theta\w \pi\left(\Om^{j+\ell-1}\w \Om^k+\Om^j\w \Om^{k-\ell+1}\right).
\end{align} 
Altogether, $(\wtC^\bullet(\psi), \dcw, \times)$ is a differential graded algebra for $\psi$, an odd degree, $d$-closed form.

\

\begin{bibdiv}
\begin{biblist}[\normalsize]*{labels={alphabetic}}

\bib{AB}{article}{
   author={Austin, D. M.},
   author={Braam, P. J.},
   title={Morse-Bott theory and equivariant cohomology},
   conference={
      title={The Floer Memorial Volume},
   },
   book={
      series={Progr. Math.},
      volume={133},
      publisher={Birkh\"{a}user, Basel},
   },
   date={1995},
   pages={123--183},
}

\bib{BZ}{article}{
   author={Bismut, J.-M.},
   author={Zhang, W.},
   title={An extension of a theorem by Cheeger and M\"{u}ller},
   note={With an appendix by F. Laudenbach},
   journal={Ast\'{e}risque},
   number={205},
   date={1992},
   pages={235},
   issn={0303-1179},
}

%

\bib{CTT2}{article}{
   author={Clausen, D.},
   author={Tang, X.},
   author={Tseng, L.-S.},
   title={Symplectic Morse theory and Witten deformation},
   note={arXiv:2211.11712v2 [math.SG]},
}

\bib{GTV}{article}{
   author={Gibson, M.},
   author={Tseng, L.-S.},
   author={Vidussi, S.},
   title={Symplectic structures with non-isomorphic primitive cohomology on open 4-manifolds},
   journal={Trans. Amer. Math. Soc.},
   volume={375},
   date={2022},
   number={12},
   pages={8399--8422},
}

\bib{Milnor}{book}{
   author={Milnor, J.},
   title={Morse theory},
   series={Annals of Mathematics Studies, No. 51},
   publisher={Princeton University Press, Princeton, N.J.},
   date={1963},
   pages={vi+153},
}
   
\bib{Pitcher}{article}{
   author={Pitcher, E.},
   title={Inequalities of critical point theory},
   journal={Bull. Amer. Math. Soc.},
   volume={64},
   date={1958},
   pages={1--30},
   issn={0002-9904},
}

\bib{TT}{article}{
   author={Tanaka, H. L.},
   author={Tseng, L.-S.},
   title={Odd sphere bundles, symplectic manifolds, and their intersection
   theory},
   journal={Camb. J. Math.},
   volume={6},
   date={2018},
   number={3},
   pages={213--266},
   issn={2168-0930},
}

\bib{TTY}{article}{
   author={Tsai, C.-J.},
   author={Tseng, L.-S.},
   author={Yau, S.-T.},
   title={Cohomology and Hodge theory on symplectic manifolds: III},
   journal={J. Differential Geom.},
   volume={103},
   date={2016},
   number={1},
   pages={83--143},
   issn={0022-040X},
}

\bib{TW}{article}{
author={Tseng, L.-S.},
   author={Wang, L.},
   title={Symplectic boundary conditions and cohomology},
   journal={J. Differential Geom.},
   volume={122},
   date={2022},
   number={2},
   pages={271--340},
}

\bib{TY1}{article}{
   author={Tseng, L.-S.},
   author={Yau, S.-T.},
   title={Cohomology and Hodge theory on symplectic manifolds: I},
   journal={J. Differential Geom.},
   volume={91},
   date={2012},
   number={3},
   pages={383--416},
   issn={0022-040X},
}

\bib{TY2}{article}{
   author={Tseng, L.-S.},
   author={Yau, S.-T.},
   title={Cohomology and Hodge theory on symplectic manifolds: II},
   journal={J. Differential Geom.},
   volume={91},
   date={2012},
   number={3},
   pages={417--443},
   issn={0022-040X},
}

\bib{Weibel}{book}{
   author={Weibel, J.},
   title={An Introduction to Homological Algebra},
   series={Cambridge Series in Advanced Mathematics},
   publisher={Cambridge University Press, New York, N.Y.},
   year={1994},
   pages={1--25},
   doi ={10.1017/CBO9781139644136},
}

\bib{Viterbo}{article}{
   author={Viterbo, C.},
   title={The cup-product on the Thom-Smale-Witten complex, and Floer
   cohomology},
   conference={
      title={The Floer Memorial Volume},},
   book={series={Progr. Math.},
      volume={133},
      publisher={Birkh\"{a}user, Basel},},
   date={1995},
   pages={609--625},
}

\bib{Zhang}{book}{
   author={Zhang, W.},
   title={Lectures on Chern-Weil theory and Witten deformations},
   series={Nankai Tracts in Mathematics},
   volume={4},
   publisher={World Scientific Publishing Co., Inc., River Edge, NJ},
   date={2001},
   pages={xii+117},
   isbn={981-02-4686-2},
   doi={10.1142/9789812386588},
}

\end{biblist}
\end{bibdiv}

\

\vskip 1cm

\noindent
{Department of Mathematics, University of California, Riverside, CA 92521, USA}\\
{\it Email address:}~{\tt dclausen@ucr.edu}
\vskip .5 cm
\noindent
{Department of Mathematics, Washington University, St. Louis, MO 63130, USA}\\
{\it Email address:}~{\tt xtang@math.wustl.edu}
\vskip .5 cm
\noindent
{Department of Mathematics, University of California, Irvine, CA 92697, USA}\\
{\it Email address:}~{\tt lstseng@uci.edu}

\end{document}